\makeatletter \@addtoreset{equation}{section} \makeatother
\newtheorem{theorem}{Theorem}[section]
\newtheorem{proposition}{Proposition}[section]
\newtheorem{lemma}{Lemma}[section]
\newtheorem{remark}{Remark}[section]
\newtheoremstyle{case}{}{}{}{}{}{\em:}{ }{}
\theoremstyle{case}
\newtheorem{case}{\it Case}
\newtheorem{subcase}{\it Subcase}
\numberwithin{subcase}{case}
\begin{document}


\title[Kirchhoff type double phase problems]
{Existence and multiplicity results for Kirchhoff type problems on a double phase setting}

\author[Alessio Fiscella]{Alessio Fiscella}
\email{fiscella@ime.unicamp.br}

\author[Andrea Pinamonti]{Andrea Pinamonti}
\email{andrea.pinamonti@unitn.it}

\address[Alessio Fiscella]{Departamento de Matem\'atica, Universidade Estadual de Campinas, IMECC\\
Rua S\'ergio Buarque de Holanda, 651, Campinas, SP CEP 13083--859 Brazil}
\address[Andrea Pinamonti]{Dipartimento di Matematica,	Universit\`a degli Studi di Trento,
	Via Sommarive 14, 38123, Povo (Trento), Italy}

\subjclass[2010]{35J62; 35J92; 35J20.} \keywords{Kirchhoff coefficients; double phase problems; variational methods.}

\date{July 2, 2020}
\maketitle


\begin{abstract} 
In this paper, we study two classes of Kirchhoff type problems set on a double phase framework. That is, the functional space where finding solutions coincides with the Musielak-Orlicz-Sobolev space $W^{1,\mathcal H}_0(\Omega)$, with modular function $\mathcal H$ related to the so called double phase operator. Via a variational approach, we provide existence and multiplicity results.

\end{abstract}

\maketitle

\section{Introduction}\label{sec:introduction}

Recently, a great attention has been devoted to the study of the energy functional
\begin{equation}\label{funzionale}
u\mapsto\int_\Omega\left(|\nabla u|^p+a(x)|\nabla u|^q\right)dx\quad\mbox{with }1<p<q,\quad a(\cdot)\geq0
\end{equation}
whose integrand switches between two different types of elliptic rates according to the coefficient $a(\cdot)$. This kind of functional was introduced by Zhikov in \cite{Z1,Z2,Z3,ZK} in order to provide models for strongly anisotropic materials.
Also, \eqref{funzionale} falls into the class of functionals with non-standard growth conditions, according to Marcellini's definition given in \cite{M1,M2}. Following this direction, Mingione et al. provide different regularity results for minimizers of \eqref{funzionale} in \cite{BCM,BCM2,CM1,CM2}. In \cite{CS}, Colasuonno and Squassina analyze the eigenvalue problem with Dirichlet boundary condition of the double phase operator $\mbox{div}\left(|\nabla u|^{p-2}\nabla u+a(x)|\nabla u|^{q-2}\nabla u\right)$, whose Euler-Lagrange functional corresponds to \eqref{funzionale}.
While, for existence and multiplicity of solutions of nonlinear problems driven by the double phase operator, we refer to \cite{GLL,LD,PS}, with the help of variational techniques, and to \cite{GW1,GW2}, through a non-variational characterization.

Aim of the present paper is to study different classes of variational Kirchhoff type problems, set on a double phase framework which will be discussed in detail on Section \ref{sec2}. For this, we first introduce the following problem 
\begin{equation}\label{P}
\left\{\begin{array}{ll}
-M\left[\displaystyle\int_\Omega\left(\frac{|\nabla u|^p}{p}+a(x)\frac{|\nabla u|^q}{q}\right)dx\right]
\mbox{div}\left(|\nabla u|^{p-2}\nabla u+a(x)|\nabla u|^{q-2}\nabla u\right)=f(x,u) & \mbox{in } \Omega,\\
u=0 & \mbox{in } \partial\Omega,
\end{array}
\right.
\end{equation}
where {\em along the paper, and without further mentioning,} $\Omega\subset\mathbb R^N$ is an open, bounded set with Lipschitz boundary, $N\geq2$, $1<p<q<N$ and
\begin{equation}\label{cruciale}
\frac{q}{p}<1+\frac{1}{N},\qquad a:\overline{\Omega}\to[0,\infty)\mbox{ {\em is Lipschitz continuous.}}
\end{equation}
Here, we assume that $M:[0,\infty)\to[0,\infty)$ is a {\em continuous} function verifying:
\begin{enumerate}
\item[$(M_1)$]
\textit{there exists} $\theta\in [1,p^*/q)$ \textit{such that} $t M(t)\le \theta\mathscr M(t)$ \textit{for any} $t\in[0,\infty)$,
\textit{where} $\mathscr M(t)=\displaystyle\int_0^t M(\tau)d\tau$ \textit{and} $p^*=Np/(N-p)$;
\item[$(M_2)$]
\textit{for any} $\tau>0$ \textit{there exists} $\kappa=\kappa(\tau)>0$ \textit{such that} $M(t)\geq \kappa$
\textit{for any} $t\geq \tau$.
\end{enumerate}
While $f:\Omega\times\mathbb R\to\mathbb R$ is a {\em Carath\'{e}odory} function verifying:
\begin{enumerate}
\item[$(f_1)$] {\em there exists an exponent $r\in (q\theta,p^*)$ such that  for any $\varepsilon>0$ there exists $\delta_\varepsilon=\delta(\varepsilon)>0$ and
$$
|f(x,t)|\leq q\theta\varepsilon\left|t\right|^{q\theta-1} +r\delta_\varepsilon\left|t\right|^{r-1}
$$
holds for a.e. $x\in \Omega$ and any $t\in\mathbb{R}$};
\item[$(f_2)$]
{\em there exist $\sigma\in (q\theta,p^*)$, $c>0$ and $t_0\geq0$ such that}
$$c\leq\sigma F(x,t)\leq tf(x,t)$$
{\em for a.e. $x\in\Omega$ and any }$|t|\geq t_0$, {\em where} $F(x,t)=\displaystyle\int^{t}_{0}f(x,\tau)d\tau$;
\item[$(f_3)$]
{\em $f(x,-t)=-f(x,t)$ for a.e. $x\in\Omega$ and any }$t\in\mathbb R$.
\end{enumerate}
Thus, we are ready to introduce our first result for \eqref{P}.

\begin{theorem}\label{T1.1}
Let $(M_1)-(M_2)$ and $(f_1)-(f_2)$ hold true.
Then, problem \eqref{P} admits a non-trivial weak solution.
\end{theorem}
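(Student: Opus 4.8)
The plan is to obtain the solution as a critical point of the Euler--Lagrange functional $J\colon W^{1,\mathcal H}_0(\Omega)\to\RR$,
$$J(u)=\mathscr M\big(\Phi(u)\big)-\int_\Omega F(x,u)\,dx,\qquad \Phi(u):=\int_\Omega\left(\frac{|\nabla u|^p}{p}+a(x)\frac{|\nabla u|^q}{q}\right)dx,$$
by means of the Mountain Pass Theorem. Using the growth condition $(f_1)$, the continuity of $M$, and the embedding and density properties of $W^{1,\mathcal H}_0(\Omega)$ collected in Section~\ref{sec2}, one verifies that $J\in C^1(W^{1,\mathcal H}_0(\Omega),\RR)$ with
$$\langle J'(u),\varphi\rangle=M(\Phi(u))\int_\Omega\big(|\nabla u|^{p-2}\nabla u+a(x)|\nabla u|^{q-2}\nabla u\big)\cdot\nabla\varphi\,dx-\int_\Omega f(x,u)\varphi\,dx,$$
so that critical points of $J$ are exactly the weak solutions of \eqref{P}; moreover $J(0)=0$ and $p\,\Phi(u)\le\langle\Phi'(u),u\rangle\le q\,\Phi(u)$. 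Two elementary consequences of the assumptions on $M$ are used throughout: since $(M_1)$ makes $t\mapsto\mathscr M(t)/t^\theta$ non-increasing, we have $\mathscr M(t)\ge c_*t^\theta$ for $0<t\le1$ and $\mathscr M(t)\le c_*t^\theta$ for $t\ge1$, with $c_*=\mathscr M(1)>0$ by $(M_2)$; and $(M_2)$ also forces $\mathscr M$ to grow at least linearly at infinity.

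First I would check the mountain pass geometry. For the behaviour near the origin, combine $\mathscr M(\Phi(u))\ge c_*\Phi(u)^\theta\ge c_*q^{-\theta}\|u\|^{q\theta}$ for $\|u\|$ small, which follows from the relation $\rho_{\mathcal H}(\nabla u)\ge\|u\|^q$ for $\|u\|\le1$ (here $\rho_{\mathcal H}(\nabla u)=\int_\Omega(|\nabla u|^p+a(x)|\nabla u|^q)dx$), with the bound $\int_\Omega F(x,u)\,dx\le\varepsilon\,C\|u\|^{q\theta}+\delta_\varepsilon C\|u\|^r$ obtained from $(f_1)$ and the continuous embeddings $W^{1,\mathcal H}_0(\Omega)\hookrightarrow L^{q\theta}(\Omega),L^r(\Omega)$ (valid since $q\theta,r<p^*$). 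Choosing $\varepsilon$ small and then $\|u\|=\rho$ small, and using $r>q\theta$, yields $J(u)\ge\alpha>0$ for $\|u\|=\rho$. For the behaviour at infinity, $(f_2)$ gives $F(x,t)\ge c_1|t|^\sigma-c_2$; fixing $u\neq0$, using $\Phi(tu)\le t^q\Phi(u)$ and $\mathscr M(s)\le c_*s^\theta$ for $s,t\ge1$, we get $J(tu)\le c_*\Phi(u)^\theta t^{q\theta}-c_1\|u\|_{L^\sigma}^\sigma\,t^\sigma+c_2|\Omega|\to-\infty$ as $t\to+\infty$, because $\sigma>q\theta$. Hence $J(e)<0$ and $\|e\|>\rho$ for $e=t_*u$ with $t_*$ large.

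The core of the argument, and the step I expect to be the main obstacle, is the Palais--Smale condition at an arbitrary level $c>0$. Let $(u_n)\subset W^{1,\mathcal H}_0(\Omega)$ satisfy $J(u_n)\to c$ and $J'(u_n)\to0$. Testing $J'(u_n)$ with $u_n$ and using $(M_1)$ in the form $M(\Phi(u_n))\langle\Phi'(u_n),u_n\rangle\le q\,M(\Phi(u_n))\Phi(u_n)\le q\theta\,\mathscr M(\Phi(u_n))$, together with $tf(x,t)-\sigma F(x,t)\ge-C$ from $(f_2)$, one obtains
$$c+o(1)+o(1)\|u_n\|\ \ge\ J(u_n)-\frac{1}{\sigma}\langle J'(u_n),u_n\rangle\ \ge\ \Big(1-\frac{q\theta}{\sigma}\Big)\mathscr M(\Phi(u_n))-C;$$
since $\sigma>q\theta$, the linear growth of $\mathscr M$ and the bound $\Phi(u_n)\ge q^{-1}\rho_{\mathcal H}(\nabla u_n)\ge q^{-1}\|u_n\|^p$ (for $\|u_n\|\ge1$) imply that $(u_n)$ is bounded in $W^{1,\mathcal H}_0(\Omega)$. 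Passing to a subsequence, $u_n\rightharpoonup u$, $u_n\to u$ in $L^s(\Omega)$ for every $s\in[1,p^*)$ by the compact embedding --- this is exactly where \eqref{cruciale} is used --- and $\Phi(u_n)\to t_\infty\ge0$. If $t_\infty=0$, then $\rho_{\mathcal H}(\nabla u_n)\to0$, so $\|u_n\|\to0$ and $J(u_n)\to J(0)=0$, contradicting $c>0$; therefore $t_\infty>0$, and $(M_2)$ provides $\kappa_0>0$ with $M(\Phi(u_n))\ge\kappa_0$ for $n$ large, while $M(\Phi(u_n))$ remains bounded. Since $\langle J'(u_n),u_n-u\rangle\to0$ and $\int_\Omega f(x,u_n)(u_n-u)\,dx\to0$ (by $(f_1)$ and the strong convergence in $L^{q\theta}(\Omega)$ and $L^r(\Omega)$), it follows that $\langle\Phi'(u_n),u_n-u\rangle\to0$, and the $(S_+)$ property of the double phase operator $\Phi'$ recalled in Section~\ref{sec2} gives $u_n\to u$ strongly. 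Hence $J$ satisfies $(PS)_c$ for all $c>0$, and the Mountain Pass Theorem furnishes a critical point $u^*$ with $J(u^*)=c\ge\alpha>0$; as $J(0)=0$, the function $u^*$ is a non-trivial weak solution of \eqref{P}.
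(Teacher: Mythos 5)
Your proposal is correct and follows essentially the same route as the paper: the mountain pass geometry is verified exactly as in Lemmas \ref{L3.1}--\ref{L3.2} (using $\mathscr M(t)\ge\mathscr M(1)t^\theta$ on $[0,1]$, $\mathscr M(t)\le\mathscr M(1)t^\theta$ for $t\ge1$, and the modular--norm relations of Proposition \ref{P2.1}), and the $(PS)$ condition is obtained as in Lemma \ref{L3.3} by the same $J-\frac1\sigma J'$ estimate for boundedness and the $(S_+)$ property of $L$ for strong convergence. The only cosmetic difference is that you exclude the case $\lim_n\phi_{\mathcal H}(\nabla u_n)=0$ by contradiction with the positive mountain pass level, whereas the paper simply notes that in that case $u_n\to0$ strongly; both are fine.
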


The proof of Theorem \ref{T1.1} is based on the application of the classical mountain pass theorem. While, assuming the simmetric assumption in $(f_3)$, thanks to the Fountain theorem we are able to get the following multiplicity result for \eqref{P}. 
For this, we can replace assumption $(f_1)$ with
\begin{itemize}
\item[$(f_1')$] {\em there exists an exponent $r\in (p,p^*)$ and $C>0$ such that
$$
|f(x,t)|\leq C\left(1+|t|^{r-1}\right)
$$
holds for a.e. $x\in \Omega$ and any $t\in\mathbb{R}$}.
\end{itemize}
Hence, we obtain the following result.

\begin{theorem}\label{T2.2}
Let $(M_1)-(M_2)$ and $(f'_1)$, $(f_2)-(f_3)$ hold true.
Then, problem \eqref{P} has infinitely many weak solutions $\{u_j\}_j$ with unbounded energy.
\end{theorem}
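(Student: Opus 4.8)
The argument will be based on the Fountain theorem of Bartsch, applied to the energy functional attached to \eqref{P}, namely
\[
\mathscr J(u)=\mathscr M\!\left(\int_\Omega\Big(\frac{|\nabla u|^p}{p}+a(x)\frac{|\nabla u|^q}{q}\Big)\,dx\right)-\int_\Omega F(x,u)\,dx ,
\]
defined on the separable reflexive Banach space $X:=W^{1,\mathcal H}_0(\Omega)$ introduced in Section~\ref{sec2}. First I would record the standard facts: since $M$ is continuous and $(f'_1)$ holds with $r<p^*$ (using the embedding $X\hookrightarrow L^r(\Omega)$ guaranteed by \eqref{cruciale}), the functional $\mathscr J$ is of class $C^1(X,\RR)$ and its critical points are precisely the weak solutions of \eqref{P}; moreover $(f_3)$ makes $F(x,\cdot)$ even, hence $\mathscr J$ is even. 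I then fix a decomposition $X=\overline{\bigoplus_{j\ge 1}X_j}$ into finite–dimensional subspaces and set $Y_k=\bigoplus_{j=1}^kX_j$, $Z_k=\overline{\bigoplus_{j\ge k}X_j}$, so that it remains to verify: (a) $\mathscr J$ satisfies $(PS)_c$ for every $c>0$; (b) $\inf\{\mathscr J(u):u\in Z_k,\ \|u\|=r_k\}\to+\infty$ as $k\to\infty$; (c) $\max\{\mathscr J(u):u\in Y_k,\ \|u\|=\varrho_k\}\le 0$, for suitable radii $\varrho_k>r_k>0$.

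For (a), given $(u_n)\subset X$ with $\mathscr J(u_n)$ bounded and $\mathscr J'(u_n)\to 0$ in $X^*$, write $A(u)=\int_\Omega(\frac{|\nabla u|^p}{p}+a(x)\frac{|\nabla u|^q}{q})\,dx$ and note $\langle A'(u),u\rangle\le qA(u)$, so that $(M_1)$ gives $M(A(u_n))\langle A'(u_n),u_n\rangle\le q\theta\,\mathscr M(A(u_n))$. Combining this with the Ambrosetti--Rabinowitz inequality in $(f_2)$ one gets
\[
\Big(1-\frac{q\theta}{\sigma}\Big)\mathscr M(A(u_n))\le\mathscr J(u_n)-\frac1\sigma\langle\mathscr J'(u_n),u_n\rangle+C,
\]
whose right–hand side stays $\le C+\|u_n\|\,o(1)$; since $\sigma>q\theta$ the prefactor is positive, and since $(M_2)$ yields $\mathscr M(t)\ge\kappa(1)(t-1)$ for $t\ge 1$ while the modular estimates on $X$ give $A(u)\ge\tfrac1q\|u\|^p$ once $\|u\|\ge 1$, this rules out $\|u_n\|\to\infty$, so $(u_n)$ is bounded. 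Up to a subsequence $u_n\rightharpoonup u$ in $X$, and by the compact embedding $X\hookrightarrow\hookrightarrow L^s(\Omega)$ for $s<p^*$ (Section~\ref{sec2}) $u_n\to u$ in $L^r(\Omega)$ and a.e.; then $\langle\mathscr J'(u_n),u_n-u\rangle\to 0$, and by $(f'_1)$ also $\int_\Omega f(x,u_n)(u_n-u)\,dx\to 0$, hence $M(A(u_n))\langle A'(u_n),u_n-u\rangle\to 0$. If $A(u_n)\to 0$ along a subsequence, the modular of $\nabla u_n$ vanishes and $u_n\to 0$ in $X$; otherwise $A(u_n)\ge\tau_0>0$ along a subsequence, so $M(A(u_n))\ge\kappa(\tau_0)>0$ by $(M_2)$, whence $\langle A'(u_n),u_n-u\rangle\to 0$ and the $(S_+)$–property of the double phase operator $A'$ (Section~\ref{sec2}) gives $u_n\to u$ strongly in $X$. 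In all cases $(PS)_c$ holds.

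For (c), on the finite–dimensional $Y_k$ all norms are equivalent; integrating $(f_2)$ gives $F(x,t)\ge c_1|t|^\sigma-c_2$, so $\int_\Omega F(x,u)\,dx\ge c_1 c_k^\sigma\|u\|^\sigma-c_2|\Omega|$, while $(M_1)$ gives $\mathscr M(t)\le\mathscr M(1)\max\{1,t^\theta\}$ and $A(u)\le\tfrac1p\|u\|^q$ for $\|u\|\ge 1$, hence $\mathscr M(A(u))\le C(1+\|u\|^{q\theta})$; as $\sigma>q\theta$ this makes $\mathscr J(u)\to-\infty$ on $Y_k$, so a large $\varrho_k$ works. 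For (b), put $\gamma_k=\sup\{\|u\|_{L^r(\Omega)}:u\in Z_k,\ \|u\|=1\}$ and, analogously, $\gamma_k'$ with $L^1$ in place of $L^r$; by reflexivity of $X$ and compactness of the embedding, $\gamma_k,\gamma_k'\to 0$. Using $(f'_1)$ (so $F(x,t)\le C(|t|+|t|^r)$), $(M_2)$ and $A(u)\ge\tfrac1q\|u\|^p$, for $u\in Z_k$ with $\|u\|=r_k\ge 1$ large,
\[
\mathscr J(u)\ge\mathscr M\!\big(\tfrac1q r_k^p\big)-C\gamma_k'r_k-C\gamma_k^r r_k^r\ge\frac{\kappa(1)}{2q}\,r_k^p-C\gamma_k'r_k-C\gamma_k^r r_k^r ,
\]
and choosing $r_k=(\kappa(1)/4qC)^{1/(r-p)}\gamma_k^{-r/(r-p)}\to+\infty$ (legitimate since $r>p$) balances the last two terms so that $\mathscr J(u)\ge\tfrac{\kappa(1)}{8q}r_k^p\to+\infty$. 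After enlarging $\varrho_k$ so that $\varrho_k>r_k$, the Fountain theorem applies and yields an unbounded sequence of critical values of $\mathscr J$, i.e. weak solutions $\{u_j\}_j$ of \eqref{P} with $\mathscr J(u_j)\to+\infty$.

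I expect the main obstacle to be step (a): beyond extracting boundedness of $(PS)$ sequences from the competition between $(M_1)$ and $(f_2)$, the passage from weak to strong convergence must handle the possibly degenerate Kirchhoff coefficient, and it is exactly here that $(M_2)$ is used — either $M(A(u_n))$ stays bounded away from zero so that the $(S_+)$–property can be invoked, or $A(u_n)\to 0$ and $u_n\to 0$ directly. A secondary technical point is the bookkeeping in (b): because the exponent $r$ in $(f'_1)$ may exceed $p$, one cannot fix $r_k$ but must let it grow with $k$ at the precise rate dictated by $\gamma_k\to 0$.
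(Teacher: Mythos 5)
Your proposal is correct and follows essentially the same route as the paper: the Fountain theorem applied to the even functional $J$, with the Palais--Smale condition obtained exactly as in Lemma \ref{L3.3} (boundedness from the competition between $(M_1)$ and $(f_2)$, then either $\phi_{\mathcal H}(\nabla u_j)\to0$ or the $(S_+)$ property of Proposition \ref{P2.4} once $(M_2)$ keeps the Kirchhoff coefficient away from zero), and the same geometric estimates on $Y_k$ and $Z_k$ driven by $\beta_k\to0$. The only differences are cosmetic (you use $\mathscr M(t)\geq\kappa(t-1)$ from $(M_2)$ where the paper uses $\mathscr M(t)\geq tM(t)/\theta$, and a separate $L^1$ constant $\gamma_k'$ where the paper uses H\"older's inequality).
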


In the second part of the paper, we consider the problem
\begin{equation}\label{P2}
\left\{\begin{array}{ll}
-M\left(\displaystyle\int_\Omega|\nabla u|^p dx\right)\Delta_p u-M\left(\displaystyle\int_\Omega a(x)|\nabla u|^q dx\right)
\mbox{div}\left(a(x)|\nabla u|^{q-2}\nabla u\right)=f(x,u) & \mbox{in } \Omega,\\
u=0 & \mbox{in } \partial\Omega.
\end{array}
\right.
\end{equation}
Even if the double phase operator does not explicitly appear in \eqref{P2}, this problem has still a variational structure set in the same double phase framework of \eqref{P}, as discussed in Section \ref{sec2}. However, because of the presence of two Kirchhoff coefficients, the study of problem \eqref{P2} is more delicate than \eqref{P}. In particular, in \eqref{P2} we must regard that the Lebesgue space $L_a^q(\Omega)$ with weight $a(\cdot)$ is a seminormed space, since $a(\cdot)$ could verify
\begin{equation}
\left|\left\{x\in\Omega:\,\,a(x)=0\right\}\right|>0,
\end{equation}
where $|\cdot|$ denotes the Lebesgue measure. 
Also, we observe that even when $M$ coincides with the Kirchhoff model $M(t)=m_1+m_2t^{\theta-1}$ for any $t\in[0,\infty)$, with $m_1\geq0$, $m_2>0$ two constants and $\theta$ given in $(M_1)$, problems \eqref{P} and \eqref{P2} are different.

We are now ready to provide the existence and multiplicity results for \eqref{P2}.

\begin{theorem}\label{T3.3}
Let $(M_1)-(M_2)$ and $(f_1)-(f_2)$ hold true. Then, problem \eqref{P2} admits a non-trivial weak solution.
\end{theorem}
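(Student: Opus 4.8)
The plan is to follow the same variational scheme as in the proof of Theorem \ref{T1.1}, namely an application of the classical mountain pass theorem to the energy functional associated with \eqref{P2}. First I would define, on the Musielak--Orlicz--Sobolev space $W^{1,\mathcal H}_0(\Omega)$, the functional
\[
J(u)=\frac1p\,\mathscr M\!\left(\int_\Omega|\nabla u|^p\,dx\right)+\frac1q\,\mathscr M\!\left(\int_\Omega a(x)|\nabla u|^q\,dx\right)-\int_\Omega F(x,u)\,dx ,
\]
and verify that $J\in C^1(W^{1,\mathcal H}_0(\Omega))$ with the expected critical point characterization, so that weak solutions of \eqref{P2} correspond exactly to critical points of $J$; here one uses $(f_1)$ together with the embeddings of $W^{1,\mathcal H}_0(\Omega)$ discussed in Section \ref{sec2} to control the Nemytskii term, exactly as for \eqref{P}. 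The essential point that distinguishes \eqref{P2} from \eqref{P} is that the second Kirchhoff term is built on the \emph{seminormed} weighted space $L^q_a(\Omega)$, so I would be careful to phrase all estimates in terms of the modular $\rho_{\mathcal H}(\nabla u)=\int_\Omega(|\nabla u|^p+a(x)|\nabla u|^q)\,dx$ and the norm $\|\cdot\|_{1,\mathcal H}$, using the standard modular-versus-norm inequalities (if $\|u\|_{1,\mathcal H}\le1$ then $\|u\|_{1,\mathcal H}^q\le\rho_{\mathcal H}(\nabla u)\le\|u\|_{1,\mathcal H}^p$, and the reverse with exponents swapped when the norm is $\ge1$).

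Next I would check the two geometric conditions of the mountain pass theorem. For the local minimum at the origin: by $(M_1)$ one has $\mathscr M(t)\ge \mathscr M(1)t^{\theta}$ for $t\ge1$ and, more usefully here, the lower bound $\mathscr M(t)\ge t M(t)/\theta$ is not what is needed — rather, since $M\ge0$ and is continuous, $\mathscr M$ is nondecreasing, so each Kirchhoff term is nonnegative, and combining with $(f_1)$ (which gives $F(x,t)\le \varepsilon|t|^{q\theta}+\delta_\varepsilon|t|^{r}$) and the Sobolev-type embeddings $W^{1,\mathcal H}_0(\Omega)\hookrightarrow L^{q\theta}(\Omega)$ and $\hookrightarrow L^{r}(\Omega)$ (both continuous because $q\theta<p^*$ and $r<p^*$), I get, for $\|u\|_{1,\mathcal H}=\rho$ small,
\[
J(u)\ \ge\ \frac1q\,\mathscr M\!\Big(\|u\|_{1,\mathcal H}^{q}\Big)\;-\;C_1\varepsilon\,\|u\|_{1,\mathcal H}^{q\theta}\;-\;C_2\delta_\varepsilon\,\|u\|_{1,\mathcal H}^{r}\ \ge\ \alpha>0 ,
\]
where the dominant term for small $\rho$ is controlled using $\mathscr M(t)\ge m_0 t$ for $t$ small — here I must invoke $(M_2)$ carefully, or rather observe that $\mathscr M(t)\ge 0$ always and that it is the modular structure plus $(f_1)$ with $\varepsilon$ chosen small that yields positivity, precisely as in the $\eqref{P}$ case but now with $\frac1p\mathscr M(\int|\nabla u|^p)$ already bounding $\mathscr M$ of something comparable to $\|u\|_{1,\mathcal H}^{p}$ from below on the unit ball. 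For the second geometric condition, fix $u_0\in W^{1,\mathcal H}_0(\Omega)\setminus\{0\}$; using $(f_2)$ one has $F(x,t)\ge c_0|t|^{\sigma}-c_1$ for a.e.\ $x$ and all $t$, while $(M_1)$ gives the upper bound $\mathscr M(t)\le \mathscr M(1)(1+t^{\theta})$, hence $J(tu_0)\le C t^{q\theta}\|u_0\|_{1,\mathcal H}^{q\theta}-c_0 t^{\sigma}\|u_0\|_\sigma^\sigma+C'$; since $\sigma>q\theta$, $J(tu_0)\to-\infty$ as $t\to\infty$, and I pick $e=t u_0$ with $t$ large.

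The main obstacle — and the step I would spend most care on — is the Palais--Smale condition. Let $\{u_n\}$ be a (PS)$_c$ sequence. From $J(u_n)\to c$ and $J'(u_n)\to0$, the combination $J(u_n)-\frac1\sigma\langle J'(u_n),u_n\rangle$ is estimated below: the nonlinear part gives $\int_\Omega\big(\frac1\sigma u_nf(x,u_n)-F(x,u_n)\big)\,dx\ge -C$ by $(f_2)$, while the Kirchhoff part produces, for each phase, a term of the form $\frac1\beta\mathscr M(A_n)-\frac1\sigma A_n M(A_n)$ with $\beta\in\{p,q\}$ and $A_n$ the corresponding integral; by $(M_1)$, $A_nM(A_n)\le\theta\mathscr M(A_n)$, so this is $\ge\big(\frac1{q}-\frac{\theta}{\sigma}\big)\mathscr M(A_n)$ for the $q$-phase (and similarly, with $\frac1p$, for the $p$-phase), and $\frac1q-\frac\theta\sigma>0$ exactly because $\sigma>q\theta\ge\theta$; combined with the modular inequalities this yields $\mathscr M(\rho_{\mathcal H}(\nabla u_n))\le C(1+\|u_n\|_{1,\mathcal H})$, and since $q\theta<\sigma$ one closes the estimate to get $\{u_n\}$ bounded in $W^{1,\mathcal H}_0(\Omega)$ — this is where the hypothesis $\theta<p^*/q$ is used to be consistent with $(f_1)$–$(f_2)$. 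Up to a subsequence $u_n\rightharpoonup u$; I would then extract that $\int_\Omega|\nabla u_n|^p\,dx\to t_1$ and $\int_\Omega a(x)|\nabla u_n|^q\,dx\to t_2$. The delicate point is that the limits $t_1,t_2$ could vanish — but if, say, $t_1=0$ along the whole sequence then $\Delta_p u_n\to0$ strongly and the $p$-term degenerates; here one argues, using $(M_2)$ with $\tau$ below any positive cluster value of the $A_n$'s, that $M(A_n)$ stays bounded away from zero on the relevant phase, so the operator $u\mapsto -M(A_n)\Delta_p u - M(B_n)\,\mathrm{div}(a(x)|\nabla u|^{q-2}\nabla u)$ retains uniform ellipticity, and then the standard $(S_+)$-property of the double phase operator on $W^{1,\mathcal H}_0(\Omega)$ (the same tool used for \eqref{P}) upgrades $u_n\rightharpoonup u$ to $u_n\to u$ strongly. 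If instead a cluster value of $A_n$ is $0$, the corresponding term drops out of the limiting equation and one still recovers strong convergence in the nondegenerate phase plus compactness of the lower-order term; the remaining case $u=0$ is excluded because it would force $c=0$, contradicting $c\ge\alpha>0$. Once (PS)$_c$ holds, the mountain pass theorem yields a critical point $u$ at level $c>0$, hence a non-trivial weak solution of \eqref{P2}, concluding the proof.
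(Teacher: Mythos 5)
Your overall strategy (mountain pass applied to the Euler--Lagrange functional of \eqref{P2}) is exactly the paper's, and your boundedness argument for Palais--Smale sequences is essentially the paper's as well. However, there are two genuine gaps. The first is in the geometry at the origin: you invoke ``$\mathscr M(t)\ge m_0 t$ for $t$ small'', which is false for degenerate Kirchhoff functions (e.g. $M(t)=t^{\theta-1}$ with $\theta>1$, which is admissible under $(M_1)$--$(M_2)$). The correct lower bound, obtained by integrating $(M_1)$, is $\mathscr M(t)\ge\mathscr M(1)t^{\theta}$ for $t\in[0,1]$; one then needs the elementary inequality $(\|\nabla u\|_p^p)^{\theta}+(\|\nabla u\|_{q,a}^q)^{\theta}\ge 2^{1-\theta}[\varrho_{\mathcal H}(\nabla u)]^{\theta}$ together with Proposition \ref{P2.1} to produce a term of order $\|u\|^{q\theta}$ on the Kirchhoff side, which is precisely what makes the comparison with the $\varepsilon|t|^{q\theta}+\delta_\varepsilon|t|^{r}$ bound from $(f_1)$ close (since $r>q\theta$). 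Your sketch never produces the exponent $q\theta$, so the local minimum at the origin is not actually established.

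The second and more serious gap is in the strong-convergence step of the $(PS)$ condition when exactly one of the limits $\ell_p=\lim_j\|\nabla u_j\|_p$, $\ell_q=\lim_j\|\nabla u_j\|_{q,a}$ vanishes and $M(0)=0$. You assert that the degenerate term ``drops out'' and that the $(S_+)$ property recovers strong convergence ``in the nondegenerate phase''; but convergence in $W^{1,\mathcal H}_0(\Omega)$ requires $\varrho_{\mathcal H}(\nabla u_j-\nabla u)\to0$, i.e. convergence of the gradients in \emph{both} $L^p(\Omega)$ and $L^q_a(\Omega)$, and the $(S_+)$ property of the operator $L$ is only usable when both Kirchhoff coefficients stay bounded away from zero. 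The paper resolves this by a separate argument: Simon's inequalities give strong convergence of the gradients in the nondegenerate phase, and then a pointwise a.e. argument shows that the case $\ell_p=0$, $\ell_q>0$ is in fact \emph{impossible} (it would force $a^{1/q}|\nabla u|=0$ a.e., contradicting $\|\nabla u\|_{q,a}=\ell_q>0$), while in the case $\ell_p>0$, $\ell_q=0$ one proves $\nabla u=0$ a.e. on $\{a>0\}$, whence $\|\nabla u_j-\nabla u\|_{q,a}=\|\nabla u_j\|_{q,a}\to0$. Without an argument of this kind your verification of $(PS)$ is incomplete. (Your closing remark that the case $u=0$ is ``excluded'' is also misplaced: if $\ell_p=\ell_q=0$ the sequence converges strongly to $0$ and the $(PS)$ requirement is satisfied trivially; nothing needs to be excluded.)
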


\begin{theorem}\label{T4.4}
Let $(M_1)-(M_2)$ and $(f'_1)$, $(f_2)-(f_3)$ hold true.
Then, problem \eqref{P2} has infinitely many weak solutions $\{u_j\}_j$ with unbounded energy.
\end{theorem}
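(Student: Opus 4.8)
The plan is to apply the Fountain theorem to the energy functional attached to \eqref{P2},
\[
J(u)=\frac1p\,\mathscr M\!\left(\int_\Omega|\nabla u|^p\,dx\right)+\frac1q\,\mathscr M\!\left(\int_\Omega a(x)|\nabla u|^q\,dx\right)-\int_\Omega F(x,u)\,dx ,\qquad u\in W^{1,\mathcal H}_0(\Omega),
\]
which is exactly the functional used in the proof of Theorem \ref{T3.3}. By $(f'_1)$ and the embedding properties of $W^{1,\mathcal H}_0(\Omega)$ recalled in Section \ref{sec2}, $J\in C^1(W^{1,\mathcal H}_0(\Omega),\mathbb{R})$; by $(f_3)$ and the symmetry in $u$ of the first two terms, $J$ is even and $J(0)=0$. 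Since $W^{1,\mathcal H}_0(\Omega)$ is separable and reflexive, I fix a decomposition $W^{1,\mathcal H}_0(\Omega)=\overline{\bigoplus_{j\ge1}X_j}$ with $\dim X_j=1$ and set $Y_k=\bigoplus_{j=1}^k X_j$, $Z_k=\overline{\bigoplus_{j\ge k}X_j}$, as required by the Fountain theorem.

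The first step is the Palais--Smale condition $(PS)_c$ for every $c\in\mathbb{R}$. For a $(PS)_c$ sequence $\{u_n\}$, boundedness follows by estimating $J(u_n)-\frac1\sigma\langle J'(u_n),u_n\rangle$ from below: $(M_1)$ gives $\frac1p\mathscr M(t)-\frac1\sigma\,tM(t)\ge\big(\frac1p-\frac\theta\sigma\big)\mathscr M(t)\ge0$ and the analogous inequality for the $q$-term, since $q\theta<\sigma$ (hence $p\theta<\sigma$ too); $(f_2)$ controls $\frac1\sigma f(x,u_n)u_n-F(x,u_n)$ from below; and $(M_2)$ yields $\mathscr M(t)\ge\kappa(1)(t-1)$, so the modular $\int_\Omega(|\nabla u_n|^p+a(x)|\nabla u_n|^q)\,dx$ stays bounded and, by the norm--modular relations, so does $\|u_n\|$. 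Up to a subsequence, $u_n\rightharpoonup u$ in $W^{1,\mathcal H}_0(\Omega)$, and $s_n:=\int_\Omega|\nabla u_n|^p\,dx\to s_0$, $\tau_n:=\int_\Omega a|\nabla u_n|^q\,dx\to\tau_0$. Using $\langle J'(u_n),u_n-u\rangle\to0$, the superlinear term vanishing by the compact embeddings and $(f'_1)$, one arrives at
\[
M(s_n)\int_\Omega|\nabla u_n|^{p-2}\nabla u_n\cdot\nabla(u_n-u)\,dx+M(\tau_n)\int_\Omega a(x)|\nabla u_n|^{q-2}\nabla u_n\cdot\nabla(u_n-u)\,dx\longrightarrow 0 ,
\]
where, modulo $o(1)$, each of the two integrals is nonnegative by monotonicity and weak convergence.

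The main obstacle — and the reason \eqref{P2} is more delicate than \eqref{P} — is to extract strong convergence $u_n\to u$ from the last display, because of the two separate Kirchhoff factors $M(s_n)$ and $M(\tau_n)$. I argue by cases on $s_0$. If $s_0>0$, then by $(M_2)$ the factor $M(s_n)$ is bounded away from $0$, which forces the first integral to $0$ and hence $u_n\to u$ in $W^{1,p}_0(\Omega)$ by the $(S_+)$ property of $-\Delta_p$; the display then forces the second integral to $0$ as well, giving $\int_\Omega a|\nabla(u_n-u)|^q\,dx\to0$ (directly for $q\ge2$, and via the standard vector inequality together with Hölder's inequality for $1<q<2$), so that $\int_\Omega(|\nabla(u_n-u)|^p+a|\nabla(u_n-u)|^q)\,dx\to0$, i.e. $u_n\to u$ in $W^{1,\mathcal H}_0(\Omega)$ by the $\Delta_2$ property. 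If $s_0=0$, then $u_n\to0$ in $W^{1,p}_0(\Omega)$, hence $u=0$, the first integral is just $s_n\to0$, and the display forces $M(\tau_n)\tau_n\to0$; this rules out $\tau_0>0$ (otherwise $M(\tau_0)>0$ by $(M_2)$), so $\tau_0=0$ too and $\int_\Omega(|\nabla u_n|^p+a|\nabla u_n|^q)\,dx=s_n+\tau_n\to0$. Alternatively, this whole discussion can be replaced by an appeal to the $(S_+)$ character of the principal operator of \eqref{P2}, recorded in Section \ref{sec2}.

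Finally, the two geometric conditions. For the bound on $Y_k$: $(M_1)$ in the form $\mathscr M(t)\le\mathscr M(1)(1+t^\theta)$ together with the norm--modular relations gives $J(u)\le C(1+\|u\|^{q\theta})-\int_\Omega F(x,u)\,dx$ for $\|u\|\ge1$, while $(f_2)$ yields $F(x,t)\ge c_1|t|^\sigma-c_2$; on the finite--dimensional $Y_k$ all norms are equivalent, and since $\sigma>q\theta$ we get $J(u)\to-\infty$ as $\|u\|\to\infty$, so $\max\{J(u):u\in Y_k,\ \|u\|=\rho_k\}\le0$ for $\rho_k$ large. For the bound on $Z_k$: by $(f'_1)$, $\int_\Omega F(x,u)\,dx\le C(1+\|u\|_r^r)$, and $\beta_k:=\sup\{\|u\|_r:u\in Z_k,\ \|u\|=1\}\to0$ as $k\to\infty$ by the compact embedding $W^{1,\mathcal H}_0(\Omega)\hookrightarrow L^r(\Omega)$, $r<p^*$; combining with $\mathscr M(t)\ge\kappa(1)(t-1)$, which gives $\frac1p\mathscr M(\|\nabla u\|_p^p)+\frac1q\mathscr M\big(\int_\Omega a|\nabla u|^q\big)\ge c\|u\|^p-C$ for $\|u\|\ge1$, yields $J(u)\ge c\|u\|^p-C-C\beta_k^r\|u\|^r$ on $Z_k$. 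Choosing $r_k$ of order $\beta_k^{-r/(r-p)}$, so that $r_k\to\infty$, and restricting to $\|u\|=r_k$, the right-hand side is bounded below by $c'r_k^p-C\to+\infty$ because $r>p$; hence $\inf\{J(u):u\in Z_k,\ \|u\|=r_k\}\to+\infty$, and one takes $\rho_k>r_k$. All hypotheses of the Fountain theorem being met, $J$ admits a sequence of critical values tending to $+\infty$, whose associated critical points provide the claimed family $\{u_j\}_j$ of weak solutions of \eqref{P2} with unbounded energy.
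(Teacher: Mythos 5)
Your overall route is the paper's: apply the Fountain theorem to the functional $I$ from Section \ref{sec2} (your $J$), verifying the Palais--Smale condition together with the two geometric conditions \eqref{cond1}--\eqref{cond2}; your estimates on $Y_k$ and $Z_k$ are essentially \eqref{4.18}--\eqref{4.19}. In two places you are in fact cleaner than Lemma \ref{L4.3}: proving boundedness of a $(PS)$ sequence via $\mathscr M(t)\ge\kappa(1)(t-1)$ avoids the paper's case analysis on which of $\|\nabla u_j\|_p$, $\|\nabla u_j\|_{q,a}$ diverges, and your treatment of $s_0=0$ (taking $u=0$ in the display to get $M(\tau_n)\tau_n\to0$, hence $\tau_0=0$) disposes of the paper's Subcase \ref{case2} without Simon's inequality or a.e.\ arguments.

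There is, however, a genuine gap in the case $s_0>0$. Writing the display as $M(s_n)A_n+M(\tau_n)B_n\to0$, you correctly deduce $A_n\to0$ and $u_n\to u$ in $W^{1,p}_0(\Omega)$, hence $M(\tau_n)B_n\to0$; but the assertion that ``the display then forces the second integral to $0$'' fails precisely in the degenerate sub-case $\tau_0=0$ with $M(0)=0$, where $M(\tau_n)\to0$ and $M(\tau_n)B_n\to0$ gives no control on $B_n$, so the Simon-inequality route to $\int_\Omega a|\nabla(u_n-u)|^q\,dx\to0$ is unavailable. This is exactly the situation the paper isolates as Subcase \ref{case3} ($\ell_p>0$, $\ell_q=0$) and resolves by a separate argument (a.e.\ convergence of $a^{1/q}|\nabla u_j|$ and the set $A=\{a=0\}$, showing $\nabla u=0$ a.e.\ on $\Omega\setminus A$). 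Your conclusion does remain true there and can be repaired quickly — since $\tau_n\to0$ and $v\mapsto\|\nabla v\|_{q,a}$ is convex and continuous, hence weakly lower semicontinuous, one gets $\|\nabla u\|_{q,a}=0$ and so $\|\nabla(u_n-u)\|_{q,a}\le\|\nabla u_n\|_{q,a}+\|\nabla u\|_{q,a}\to0$ — but some such argument must be supplied; the display alone does not do it. Relatedly, your closing remark that the whole discussion ``can be replaced by an appeal to the $(S_+)$ character of the principal operator of \eqref{P2}'' is not justified: Proposition \ref{P2.4} concerns the unweighted operator $L$, and the two separately degenerating Kirchhoff coefficients are the very reason this reduction fails (the paper invokes Proposition \ref{P2.4} only in Subcase \ref{case4}, where both coefficients are bounded away from zero).
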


The paper is organized as follows.
In Section \ref{sec2}, we introduce the basic properties of the Musielak-Orlicz and Musielak-Orlicz-Sobolev spaces and we set the variational structure of problems \eqref{P} and \eqref{P2}. In Section \ref{sec3},
we prove Theorems \ref{T1.1} and \ref{T2.2}. While, in Section \ref{sec4},
we prove Theorems \ref{T3.3} and \ref{T4.4}.

\section{Preliminaries}\label{sec2}

The function $\mathcal H:\Omega\times[0,\infty)\to[0,\infty)$ defined as
$$\mathcal H(x,t):=t^p+a(x)t^q,\quad\mbox{for a.e. }x\in\Omega\mbox{ and for any }t\in[0,\infty),
$$
with $1<p<q$ and $0\leq a \in L^1(\Omega)$, is a generalized N-function (N stands for {\em nice}), according to the definition in \cite{D,M}, and satisfies the so called $(\Delta_2)$ condition, that is
$$\mathcal H(x,2t)\leq t^q\mathcal H(x,t),\quad\mbox{for a.e. }x\in\Omega\mbox{ and for any }t\in[0,\infty).
$$
Therefore, by \cite{M} we can define the Musielak-Orlicz space $L^{\mathcal H}(\Omega)$ as
$$L^{\mathcal H}(\Omega):=\left\{u:\Omega\to\mathbb R\mbox{ measurable}:\,\,\varrho_{\mathcal H}(u)<\infty\right\},
$$
endowed with the Luxemburg norm
$$\|u\|_{\mathcal H}:=\inf\left\{\lambda>0:\,\,\varrho_{\mathcal H}\left(\frac{u}{\lambda}\right)\leq1\right\},$$
where $\varrho_{\mathcal H}$ denotes the $\mathcal H$-modular function, set as
\begin{equation}\label{rhoh}
\varrho_{\mathcal H}(u):=\int_\Omega\mathcal H(x,|u|)dx=\int_\Omega\left(|u|^p+a(x)|u|^q\right)dx.
\end{equation}
By \cite{CS,D}, the space $L^{\mathcal H}(\Omega)$ is a separable, uniformly convex, Banach space.
Furthermore, we define the weighted space
$$L^q_a(\Omega):=\left\{u:\Omega\to\mathbb R\mbox{ measurable}:\,\,\int_\Omega a(x)|u|^q dx<\infty\right\},
$$  
equipped with the seminorm
$$
\|u\|_{q,a}:=\left(\int_\Omega a(x)|u|^q dx\right)^{1/q}.
$$
By \cite[Proposition 2.15(i),(iv),(v)]{CS} we have the continuous embedding
$$L^q(\Omega)\hookrightarrow L^{\mathcal H}(\Omega)\hookrightarrow L^p(\Omega)\cap L_a^q(\Omega).
$$
While, by \cite[Proposition 2.1]{LD} we have the following relation between the norm $\|\cdot\|_{\mathcal H}$ and the $\mathcal H$-modular.

\begin{proposition}\label{P2.1}
Assume that $u\in L^{\mathcal H}(\Omega)$, $\{u_j\}_j\subset
L^{\mathcal H}(\Omega)$ and $c>0$. Then
\begin{itemize}
\item[$(i)$] for $u\neq0$, $\|u\|_{\mathcal H}=c\Leftrightarrow\varrho_{\mathcal H}\left(\frac{u}{c}\right)=1$;
\vspace{0.1cm}
\item[$(ii)$] $\|u\|_{\mathcal H}<1$ $(resp.=1,\,>1)$ $\Leftrightarrow \varrho_{\mathcal H}(u)<1$ $(resp.=1,\,>1)$;
\vspace{0.1cm}
\item[$(iii)$] $\|u\|_{\mathcal H}<1\Rightarrow\|u\|_{\mathcal H}^q\leq\varrho_{\mathcal H}(u)\leq\|u\|_{\mathcal H}^p$;
\vspace{0.1cm}
\item[$(iv)$] $\|u\|_{\mathcal H}>1\Rightarrow\|u\|_{\mathcal H}^p\leq\varrho_{\mathcal H}(u)\leq\|u\|_{\mathcal H}^q$;
\vspace{0.1cm}
\item[$(v)$] $\lim\limits_{j\rightarrow\infty}\|u_j\|_{\mathcal H}=0\,(\infty)\Leftrightarrow\lim\limits_{j\rightarrow\infty}\varrho_{\mathcal H}(u_j)=0\,(\infty)$.
\end{itemize}
\end{proposition}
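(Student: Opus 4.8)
The plan is to trade the abstract Musielak--Orlicz machinery for the completely explicit form of the modular. For $u\in L^{\mathcal H}(\Omega)$ and $\lambda>0$, equation \eqref{rhoh} gives at once
\[
\varrho_{\mathcal H}(u/\lambda)=\lambda^{-p}\int_\Omega|u|^p\,dx+\lambda^{-q}\int_\Omega a(x)|u|^q\,dx=:g_u(\lambda),
\]
where both integrals are finite by the embedding $L^{\mathcal H}(\Omega)\hookrightarrow L^p(\Omega)\cap L^q_a(\Omega)$ recalled above, and $\int_\Omega|u|^p\,dx>0$ whenever $u\neq0$. Hence, for $u\neq0$, the function $g_u$ is continuous and strictly decreasing on $(0,\infty)$, with $g_u(\lambda)\to+\infty$ as $\lambda\to0^+$ and $g_u(\lambda)\to0$ as $\lambda\to+\infty$. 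Comparing the two powers of $\lambda$ also yields the elementary scaling bounds
\[
\min\{c^{-p},c^{-q}\}\,\varrho_{\mathcal H}(u)\le\varrho_{\mathcal H}(u/c)\le\max\{c^{-p},c^{-q}\}\,\varrho_{\mathcal H}(u)\qquad\text{for all }c>0,
\]
where $\min=c^{-p}$ and $\max=c^{-q}$ if $c\le1$, and the roles swap if $c\ge1$. These two observations are the whole engine of the argument.

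Granting them, I would proceed as follows. For $(i)$, fix $u\neq0$; then $\{\lambda>0:g_u(\lambda)\le1\}$ is a half-line $[c_0,\infty)$, and by continuity and strict monotonicity of $g_u$ the endpoint $c_0$ is the unique solution of $g_u(c_0)=1$. By definition $\|u\|_{\mathcal H}=c_0$, which is exactly the claimed equivalence $\|u\|_{\mathcal H}=c\Leftrightarrow\varrho_{\mathcal H}(u/c)=1$. For $(iii)$ and $(iv)$, apply the scaling bounds with $c=\|u\|_{\mathcal H}$ to $v:=u/\|u\|_{\mathcal H}$, for which $\varrho_{\mathcal H}(v)=1$ by $(i)$: if $\|u\|_{\mathcal H}<1$ one gets $\|u\|_{\mathcal H}^q\le\varrho_{\mathcal H}(u)\le\|u\|_{\mathcal H}^p$, and if $\|u\|_{\mathcal H}>1$ one gets $\|u\|_{\mathcal H}^p\le\varrho_{\mathcal H}(u)\le\|u\|_{\mathcal H}^q$ (the case $u=0$ being trivial throughout).

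Item $(ii)$ then follows formally: the equivalence $\|u\|_{\mathcal H}=1\Leftrightarrow\varrho_{\mathcal H}(u)=1$ is $(i)$ with $c=1$; the implications $\|u\|_{\mathcal H}<1\Rightarrow\varrho_{\mathcal H}(u)<1$ and $\|u\|_{\mathcal H}>1\Rightarrow\varrho_{\mathcal H}(u)>1$ come from $(iii)$--$(iv)$; and the reverse implications are obtained by trichotomy. Finally $(v)$ is read off from $(ii)$--$(iv)$: if $\|u_j\|_{\mathcal H}\to0$ then eventually $\|u_j\|_{\mathcal H}<1$ and $(iii)$ forces $\varrho_{\mathcal H}(u_j)\le\|u_j\|_{\mathcal H}^p\to0$, while if $\varrho_{\mathcal H}(u_j)\to0$ then eventually $\varrho_{\mathcal H}(u_j)<1$, so $\|u_j\|_{\mathcal H}<1$ by $(ii)$ and $\|u_j\|_{\mathcal H}^q\le\varrho_{\mathcal H}(u_j)\to0$ by $(iii)$; the statements with $\infty$ are obtained the same way, using $(iv)$ in place of $(iii)$.

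No step is a genuine obstacle here — the statement becomes elementary once the explicit form of $\varrho_{\mathcal H}$ is exploited. The only point requiring a line of care is the attainment of the infimum in $(i)$, i.e.\ checking that $\|u\|_{\mathcal H}$ actually equals the unique zero of $g_u-1$ rather than merely being bounded above by it; this is immediate from the continuity and strict monotonicity of $g_u$ together with its limits at $0^+$ and $+\infty$. Since the statement is quoted verbatim as \cite[Proposition 2.1]{LD}, one may alternatively simply cite that reference; the sketch above is the reasoning behind it.
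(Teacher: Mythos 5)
Your proof is correct. Note that the paper itself offers no proof of this proposition: it is quoted verbatim from \cite[Proposition 2.1]{LD}, so there is no in-paper argument to compare against. Your explicit computation with $g_u(\lambda)=\lambda^{-p}\int_\Omega|u|^p\,dx+\lambda^{-q}\int_\Omega a(x)|u|^q\,dx$ is exactly the standard modular-versus-Luxemburg-norm argument, and all five items follow as you describe; the key points (strict monotonicity and continuity of $g_u$ for $u\neq0$, the limits at $0^+$ and $+\infty$ guaranteeing that the infimum defining $\|u\|_{\mathcal H}$ is attained at the unique root of $g_u=1$, and the two-sided scaling bounds coming from $p<q$) are all present and correctly justified. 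The only cosmetic remark: in deriving $(iii)$--$(iv)$ you say you apply the scaling bound ``with $c=\|u\|_{\mathcal H}$ to $v=u/\|u\|_{\mathcal H}$''; what you actually use is the multiplicative form $\min\{c^p,c^q\}\,\varrho_{\mathcal H}(v)\le\varrho_{\mathcal H}(cv)\le\max\{c^p,c^q\}\,\varrho_{\mathcal H}(v)$, which is the displayed inequality read in the other direction, so nothing is lost.
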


The related Sobolev space $W^{1,\mathcal H}(\Omega)$ is defined by
$$W^{1,\mathcal H}(\Omega):=\left\{u\in L^{\mathcal H}(\Omega):\,\,|\nabla u|\in L^{\mathcal H}(\Omega)\right\},
$$
endowed with the norm
\begin{equation}\label{norma}
\|u\|_{1,\mathcal H}:=\|u\|_{\mathcal H}+\|\nabla u\|_{\mathcal H},
\end{equation}
where we write $\|\nabla u\|_{\mathcal H}=\||\nabla u|\|_{\mathcal H}$ to simplify the notation.
We denote by $W^{1,\mathcal H}_0(\Omega)$ the completion of $C^\infty_0(\Omega)$ in $W^{1,\mathcal H}(\Omega)$ which can be endowed with the norm
$$\|u\|:=\|\nabla u\|_{\mathcal H},
$$
equivalent to the norm set in \eqref{norma}, thanks to \cite[Proposition 2.18(iv)]{CS} whenever \eqref{cruciale} holds true. Also, by \cite[Proposition 2.15(ii)-(iii)]{CS} we have the following embeddings.

\begin{proposition}\label{P2.3}
For any $\nu\in[1,p^*]$ there exists a constant $C_\nu=C(N,p,q,\nu,\Omega)>0$ such that
$$\|u\|_\nu^\nu\leq C_\nu\|u\|^\nu
$$
for any $u\in W^{1,\mathcal H}_0(\Omega)$.
Moreover, the embedding $W^{1,\mathcal H}_0(\Omega)\hookrightarrow L^\nu(\Omega)$ is compact for any $\nu\in[1,p^*)$.
\end{proposition}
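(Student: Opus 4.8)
The plan is to deduce both the embedding inequality and the compactness from the classical Sobolev theory on $W^{1,p}_0(\Omega)$, using the continuous embedding $L^{\mathcal H}(\Omega)\hookrightarrow L^p(\Omega)$ recalled above. The first step is to observe that every $v\in L^{\mathcal H}(\Omega)$ satisfies $\|v\|_p\le\|v\|_{\mathcal H}$: indeed, for $v\ne 0$, putting $\lambda=\|v\|_{\mathcal H}$ one gets $\varrho_{\mathcal H}(v/\lambda)=1$ by Proposition \ref{P2.1}$(i)$, and since $\varrho_{\mathcal H}(w)=\int_\Omega(|w|^p+a(x)|w|^q)\,dx\ge\int_\Omega|w|^p\,dx$ this forces $\int_\Omega|v|^p\,dx\le\lambda^p$. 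Applying this with $v=|\nabla u|$ yields $\|\nabla u\|_p\le\|u\|$ for every $u\in W^{1,\mathcal H}_0(\Omega)$.

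Next I would check that $W^{1,\mathcal H}_0(\Omega)$ is continuously embedded in $W^{1,p}_0(\Omega)$. By the previous inequality the norm $\|\cdot\|_{1,\mathcal H}$ of \eqref{norma} dominates a constant multiple of the $W^{1,p}$-norm on $C^\infty_0(\Omega)$, so any sequence in $C^\infty_0(\Omega)$ which is Cauchy for $\|\cdot\|_{1,\mathcal H}$ is Cauchy for $\|\cdot\|_{W^{1,p}}$; passing to completions, the identity map on $C^\infty_0(\Omega)$ extends to a bounded linear injection $W^{1,\mathcal H}_0(\Omega)\hookrightarrow W^{1,p}_0(\Omega)$. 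In particular every $u\in W^{1,\mathcal H}_0(\Omega)$ belongs to $W^{1,p}_0(\Omega)$.

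Fix now $\nu\in[1,p^*]$. Since $\Omega$ is bounded with Lipschitz boundary, the Sobolev--Poincar\'e inequality on $W^{1,p}_0(\Omega)$ provides $S_\nu=S(N,p,\nu,\Omega)>0$ with $\|u\|_\nu\le S_\nu\|\nabla u\|_p$; combined with the first step this gives $\|u\|_\nu\le S_\nu\|u\|$, and raising to the power $\nu$ produces $\|u\|_\nu^\nu\le C_\nu\|u\|^\nu$ with $C_\nu:=S_\nu^\nu$, a constant with the stated dependence. For the compactness, let $\nu\in[1,p^*)$: the Rellich--Kondrachov theorem makes $W^{1,p}_0(\Omega)\hookrightarrow L^\nu(\Omega)$ compact, and composing it with the continuous inclusion $W^{1,\mathcal H}_0(\Omega)\hookrightarrow W^{1,p}_0(\Omega)$ of the second step (which carries bounded sets to bounded sets) shows that $W^{1,\mathcal H}_0(\Omega)\hookrightarrow L^\nu(\Omega)$ is compact.

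The argument is essentially soft once $L^{\mathcal H}(\Omega)\hookrightarrow L^p(\Omega)$ is available; the only delicate point is the identification $W^{1,\mathcal H}_0(\Omega)\subset W^{1,p}_0(\Omega)$, that is, the compatibility of the two completions of $C^\infty_0(\Omega)$, which is exactly what the norm-domination observation of the second step takes care of. Alternatively, one may quote directly \cite[Proposition 2.15(ii)-(iii)]{CS}, where these embeddings are established intrinsically within the Musielak--Orlicz--Sobolev setting.
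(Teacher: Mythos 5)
Your argument is correct. Note, however, that the paper does not actually prove this proposition: it simply invokes \cite[Proposition 2.15(ii)-(iii)]{CS}, so your self-contained reduction to the classical $W^{1,p}_0$ theory is a genuinely different (and more elementary) route. The key observation $\|v\|_p\le\|v\|_{\mathcal H}$, obtained from $\varrho_{\mathcal H}(v/\lambda)=1$ together with $\varrho_{\mathcal H}(w)\ge\|w\|_p^p$, is sound, and you are right to flag the only delicate point, namely that the two completions of $C^\infty_0(\Omega)$ are compatible; this is settled as you say because convergence in either norm implies convergence in $L^1_{\rm loc}(\Omega)$ and of the distributional gradients, so the extended map really is the inclusion. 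From there the Sobolev--Poincar\'e inequality and Rellich--Kondrachov give both claims, with the constant $C_\nu=S_\nu^\nu$ not even depending on $q$ or on $a(\cdot)$ (a slightly stronger conclusion than stated). What the citation to \cite{CS} buys instead is a sharper scale of embeddings intrinsic to the Musielak--Orlicz--Sobolev setting (e.g.\ into $L^{\mathcal H^*}$ and weighted spaces), which the paper uses elsewhere only through the special cases you recover here; for the present statement your elementary argument suffices. One cosmetic remark: your proof does not use the hypothesis \eqref{cruciale} nor the equivalence of $\|\cdot\|$ with $\|\cdot\|_{1,\mathcal H}$, since you estimate directly $\|\nabla u\|_p\le\|\nabla u\|_{\mathcal H}=\|u\|$, which is consistent with the way the proposition is stated.
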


Let us define the operator $L:W^{1,\mathcal H}_0(\Omega)\to\left(W^{1,\mathcal H}_0(\Omega)\right)^*$ such that
$$\langle L(u),v\rangle:=\int_\Omega\left(|\nabla u|^{p-2}+a(x)|\nabla u|^{q-2}\right)\nabla u\cdot\nabla v dx,
$$
for any $u$, $v\in W^{1,\mathcal H}_0(\Omega)$.
Here, $\left(W^{1,\mathcal H}_0(\Omega)\right)^*$ denotes the dual space of $W^{1,\mathcal H}_0(\Omega)$ and $\langle \cdot,\cdot\rangle$ is the related dual pairing. Then, we have the following crucial result, given in \cite[Proposition 3.1(ii)]{LD}.

\begin{proposition}\label{P2.4}
$L:W^{1,\mathcal H}_0(\Omega)\to\left(W^{1,\mathcal H}_0(\Omega)\right)^*$ is a mapping of $(S_+)$ type, that is if $u_j\rightharpoonup u$ in $W^{1,\mathcal H}_0(\Omega)$ and $\limsup\limits_{j\to\infty}\langle L(u_j)-L(u),u_j-u\rangle\leq0$, then $u_j\to u$ in $W^{1,\mathcal H}_0(\Omega)$.
\end{proposition}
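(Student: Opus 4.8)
The plan is to reduce the claim, via Proposition~\ref{P2.1}$(v)$ applied to the sequence $\nabla u_j-\nabla u$, to showing that the modular
$$\varrho_{\mathcal H}(\nabla u_j-\nabla u)=\int_\Omega\Big(|\nabla u_j-\nabla u|^p+a(x)|\nabla u_j-\nabla u|^q\Big)\,dx$$
tends to $0$; once this is established, Proposition~\ref{P2.1}$(v)$ gives $\|u_j-u\|=\|\nabla u_j-\nabla u\|_{\mathcal H}\to0$, which is exactly strong convergence in $W^{1,\mathcal H}_0(\Omega)$.

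First I would observe that the integrand defining $\langle L(u_j)-L(u),u_j-u\rangle$ is pointwise nonnegative, being the sum of the two monotonicity terms $\big(|\nabla u_j|^{p-2}\nabla u_j-|\nabla u|^{p-2}\nabla u\big)\cdot(\nabla u_j-\nabla u)$ and $a(x)\big(|\nabla u_j|^{q-2}\nabla u_j-|\nabla u|^{q-2}\nabla u\big)\cdot(\nabla u_j-\nabla u)$, each $\geq0$ by monotonicity of $\xi\mapsto|\xi|^{s-2}\xi$. Hence $\langle L(u_j)-L(u),u_j-u\rangle\geq0$, and together with the hypothesis $\limsup_j\langle L(u_j)-L(u),u_j-u\rangle\leq0$ this forces $\langle L(u_j)-L(u),u_j-u\rangle\to0$; since the two pieces are nonnegative, the $p$-part $I_j^p:=\int_\Omega\big(|\nabla u_j|^{p-2}\nabla u_j-|\nabla u|^{p-2}\nabla u\big)\cdot(\nabla u_j-\nabla u)\,dx$ and the weighted $q$-part $I_j^q$ each tend to $0$ separately.

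Next I would recover the modular from $I_j^p,I_j^q$ using the classical Simon inequalities for $\xi\mapsto|\xi|^{s-2}\xi$. If $s\geq2$ one has $\big(|\xi|^{s-2}\xi-|\eta|^{s-2}\eta\big)\cdot(\xi-\eta)\geq c_s|\xi-\eta|^s$ pointwise, so $\int_\Omega|\nabla u_j-\nabla u|^s\,dx\leq c_s^{-1}I_j^s\to0$ (inserting $a(x)$ throughout in the $q$-case). If $1<s<2$ one instead uses $|\xi-\eta|^s\leq c_s\big[\big(|\xi|^{s-2}\xi-|\eta|^{s-2}\eta\big)\cdot(\xi-\eta)\big]^{s/2}(|\xi|+|\eta|)^{(2-s)s/2}$ and Hölder with exponents $2/s,\,2/(2-s)$ to get $\int_\Omega|\nabla u_j-\nabla u|^s\,dx\leq c_s (I_j^s)^{s/2}\big(\int_\Omega(|\nabla u_j|+|\nabla u|)^s\,dx\big)^{(2-s)/2}$; in the weighted case $1<q<2$ one first writes $a=a^{q/2}\cdot a^{1-q/2}$ before applying Hölder, obtaining $\int_\Omega a(x)|\nabla u_j-\nabla u|^q\,dx\leq c_q (I_j^q)^{q/2}\big(\int_\Omega a(x)(|\nabla u_j|+|\nabla u|)^q\,dx\big)^{(2-q)/2}$. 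Since $u_j\rightharpoonup u$ the sequence is bounded in $W^{1,\mathcal H}_0(\Omega)$, hence $\nabla u_j$ is bounded in $L^{\mathcal H}(\Omega)$ and, by the embedding $L^{\mathcal H}(\Omega)\hookrightarrow L^p(\Omega)\cap L^q_a(\Omega)$, the quantities $\|\nabla u_j\|_p$ and $\|\nabla u_j\|_{q,a}$ are bounded; the extra factors above are therefore bounded, and in every case $1<p<q$ we conclude $\int_\Omega|\nabla u_j-\nabla u|^p\,dx\to0$ and $\int_\Omega a(x)|\nabla u_j-\nabla u|^q\,dx\to0$. Adding these gives $\varrho_{\mathcal H}(\nabla u_j-\nabla u)\to0$, which finishes the proof as explained.

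The only genuinely delicate point is the singular range $1<p<2$ (and, when it occurs, $1<q<2$): there the comparison between $|\nabla u_j-\nabla u|^s$ and the monotonicity term is not linear and must be routed through the Hölder splitting above, taking care in the $q$-term to distribute the weight $a(x)$ so that the "energy-like" factor is precisely $I_j^q$ (which we already know tends to $0$) and the remaining factor is a bounded power of $\|\nabla u_j\|_{q,a}+\|\nabla u\|_{q,a}$. When $p\geq2$ (hence also $q\geq2$), or when only $q\geq2$, the argument is immediate from the first Simon inequality.
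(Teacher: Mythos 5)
Your argument is correct, but it is worth pointing out that the paper does not prove Proposition \ref{P2.4} at all: it is imported verbatim from \cite[Proposition 3.1(ii)]{LD}, so there is no in-paper proof to match. What you have written is a valid, self-contained substitute. The chain of reasoning is sound: nonnegativity of both monotonicity terms upgrades the $\limsup$ hypothesis to $\langle L(u_j)-L(u),u_j-u\rangle\to0$ and then splits it into the $p$-part and the weighted $q$-part separately; the Simon inequalities (for $s\ge2$ directly, for $1<s<2$ via H\"older with exponents $2/s$ and $2/(2-s)$, distributing the weight as $a=a^{q/2}a^{(2-q)/2}$) convert each part into control of $\int_\Omega|\nabla u_j-\nabla u|^p\,dx$ and $\int_\Omega a(x)|\nabla u_j-\nabla u|^q\,dx$; the extra factors are bounded because weak convergence gives boundedness in $W^{1,\mathcal H}_0(\Omega)$ and hence, via $L^{\mathcal H}(\Omega)\hookrightarrow L^p(\Omega)\cap L^q_a(\Omega)$, in $L^p$ and $L^q_a$; and Proposition \ref{P2.1}$(v)$ applied to $|\nabla u_j-\nabla u|$ converts modular convergence into norm convergence. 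Notably, this is exactly the toolkit the authors themselves deploy later, inside the proof of Lemma \ref{L4.3}: compare your weighted H\"older step with \eqref{simon}, \eqref{1944due} and \eqref{1944tre}. The only trade-off is that the citation to \cite{LD} also delivers continuity, strict monotonicity and the homeomorphism property of $L$ in one package, whereas your direct argument establishes only the $(S_+)$ property --- which is all this paper uses. One cosmetic remark: in your reduction you only use the weak convergence $u_j\rightharpoonup u$ through boundedness of the sequence, since you prove strong convergence to the weak limit $u$ directly; that is fine, and in fact slightly more than the abstract $(S_+)$ definition requires.
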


We are now ready to introduce the variational setting for problems \eqref{P} and \eqref{P2}.
We say that a function $u\in W^{1,\mathcal H}_0(\Omega)$  is a weak solution of \eqref{P} if
$$
M[\phi_{\mathcal H}(\nabla u)]\langle L(u),\varphi\rangle
=\int_\Omega f(x,u)\varphi dx,
$$
for any $\varphi\in W^{1,\mathcal H}_0(\Omega)$, where we denote
$$\phi_{\mathcal H}(u):=\int_\Omega\left(\frac{|u|^p}{p}+a(x)\frac{|u|^q}{q}\right)dx.
$$
Clearly, the weak solutions of \eqref{P} are exactly the critical points of the Euler-Lagrange functional $J:W^{1,\mathcal H}_0(\Omega)\to\mathbb R$, given by
$$J(u):=\mathscr M[\phi_{\mathcal H}(\nabla u)]-\int_\Omega F(x,u)dx,
$$
which is well defined and of class $C^{1}$ on $W^{1,\mathcal H}_0(\Omega)$. 

Similarly, a function $u\in W^{1,\mathcal H}_0(\Omega)$  is a weak solution of \eqref{P2} if
$$
M(\|\nabla u\|_p^p)\int_\Omega |\nabla u|^{p-2}\nabla u\cdot\nabla\varphi dx+
M(\|\nabla u\|_{q,a}^q)\int_\Omega a(x)|\nabla u|^{q-2}\nabla u\cdot\nabla\varphi dx
=\int_\Omega f(x,u)\varphi dx,
$$
for any $\varphi\in W^{1,\mathcal H}_0(\Omega)$. In this case, the Euler-Lagrange functional $I:W^{1,\mathcal H}_0(\Omega)\to\mathbb R$ associated to \eqref{P2} is set as
$$I(u):=\frac{1}{p}\mathscr M(\|\nabla u\|_p^p)+\frac{1}{q}\mathscr M(\|\nabla u\|_{q,a}^q)-\int_\Omega F(x,u)dx,
$$
which is well defined and of class $C^{1}$ on $W^{1,\mathcal H}_0(\Omega)$, thanks to Proposition \ref{P2.1} and \eqref{rhoh}.

\section{Proof of Theorems \ref{T1.1} and \ref{T2.2}}\label{sec3}

We start the section verifying that functional $J$ satisfies the geometric features of the mountain pass theorem, see e.g. \cite[Theorem 1.15]{Wil}.

\begin{lemma}\label{L3.1}
Let $(M_1)-(M_2)$ and $(f_1)$ hold true. Then, there exist $\rho\in(0,1]$ and $\alpha=\alpha(\rho)>0$ such that $J(u)\geq \alpha$ for any $u\in W^{1,\mathcal H}_0(\Omega)$, with $\|u\|=\rho$.
\end{lemma}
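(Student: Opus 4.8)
The plan is to bound $J(u)=\mathscr M[\phi_{\mathcal H}(\nabla u)]-\int_\Omega F(x,u)\,dx$ from below on the sphere $\|u\|=\rho$ with $\rho\in(0,1]$ small, showing the Kirchhoff term dominates the reaction term. The first ingredient is a power-type lower bound for $\mathscr M$ near the origin: condition $(M_1)$ says precisely that $t\mapsto\mathscr M(t)/t^\theta$ has non-positive derivative, hence is non-increasing on $(0,\infty)$, so $\mathscr M(t)\ge\mathscr M(1)\,t^\theta$ for every $t\in(0,1]$, and $\mathscr M(1)>0$ because $(M_2)$ with $\tau=1/2$ forces $M\ge\kappa$ on $[1/2,1]$. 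Then, for $\|u\|\le1$, Proposition \ref{P2.1}$(iii)$ together with $1/p\ge1/q$ gives
$$\frac1q\|u\|^q\le\frac1q\varrho_{\mathcal H}(\nabla u)\le\phi_{\mathcal H}(\nabla u)\le\frac1p\varrho_{\mathcal H}(\nabla u)\le\frac1p\le1,$$
so, using that $\mathscr M$ is non-decreasing,
$$\mathscr M[\phi_{\mathcal H}(\nabla u)]\ge\mathscr M(1)\,[\phi_{\mathcal H}(\nabla u)]^\theta\ge\frac{\mathscr M(1)}{q^{\theta}}\,\|u\|^{q\theta}.$$

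Next I would control the reaction term. Integrating $(f_1)$ yields $|F(x,t)|\le\varepsilon|t|^{q\theta}+\delta_\varepsilon|t|^{r}$, hence $\int_\Omega F(x,u)\,dx\le\varepsilon\|u\|_{q\theta}^{q\theta}+\delta_\varepsilon\|u\|_r^r$. Since $1\le q\theta<p^*$ (as $\theta<p^*/q$) and $p<r<p^*$, Proposition \ref{P2.3} applies to both exponents and bounds the right-hand side by $\varepsilon C_{q\theta}\|u\|^{q\theta}+\delta_\varepsilon C_r\|u\|^{r}$.

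Combining the two estimates, for $\|u\|\le1$,
$$J(u)\ge\Big(\frac{\mathscr M(1)}{q^{\theta}}-\varepsilon C_{q\theta}\Big)\|u\|^{q\theta}-\delta_\varepsilon C_r\|u\|^{r}.$$
I would then fix $\varepsilon>0$ so small that $c_0:=\mathscr M(1)q^{-\theta}-\varepsilon C_{q\theta}>0$ (this freezes $\delta_\varepsilon$), and afterwards choose $\rho\in(0,1]$ with $\delta_\varepsilon C_r\,\rho^{\,r-q\theta}\le c_0/2$, which is possible because $r>q\theta$. For $\|u\|=\rho$ this gives $J(u)\ge\tfrac{c_0}{2}\rho^{q\theta}=:\alpha>0$, as required.

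The only genuinely delicate point is the first step: the modular/norm comparison in Proposition \ref{P2.1} switches exponents according to whether $\|u\|$ is below or above $1$ (parts $(iii)$ versus $(iv)$), so restricting to $\rho\le1$ is essential, and one must check that $\phi_{\mathcal H}(\nabla u)\in(0,1]$ before invoking $\mathscr M(t)\ge\mathscr M(1)t^\theta$. Everything else is a routine application of the embeddings of Proposition \ref{P2.3} and of the strict gap $r>q\theta$ in $(f_1)$.
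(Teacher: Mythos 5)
Your proposal is correct and follows essentially the same route as the paper: the lower bound $\mathscr M(t)\geq\mathscr M(1)t^{\theta}$ on $[0,1]$ from $(M_1)$--$(M_2)$, the modular estimate $\phi_{\mathcal H}(\nabla u)\leq\frac1p\varrho_{\mathcal H}(\nabla u)\leq1$ together with $\phi_{\mathcal H}(\nabla u)\geq\frac1q\|u\|^{q}$ via Proposition \ref{P2.1}$(iii)$, the growth bound \eqref{3.1} with Proposition \ref{P2.3}, and the final choice of $\varepsilon$ and then $\rho$ exploiting $r>q\theta$. The only cosmetic difference is that you spell out why $(M_1)$ yields the monotonicity of $t\mapsto\mathscr M(t)/t^{\theta}$ and why $\mathscr M(1)>0$, which the paper leaves implicit.
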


\begin{proof}
By $(f_1)$, for any $\varepsilon>0$ we have a $\delta_\varepsilon>0$ such that
\begin{equation}\label{3.1}
|F(x,t)|\leq\varepsilon|t|^{q\theta}+\delta_\varepsilon|t|^r,\quad\mbox{for a.e. }x\in\Omega\mbox{ and any }t\in\mathbb R.
\end{equation}
While, by integrating $(M_1)$ and considering that $M(t)>0$ for any $t>0$ by $(M_2)$, we have
\begin{equation}\label{3.2}
\mathscr M(t)\geq\mathscr M(1)t^{\theta}\quad\mbox{for any }t\in[0,1].
\end{equation}
Also, by Proposition \ref{P2.1} for any $u\in W^{1,\mathcal H}_0(\Omega)$ with $\|u\|\leq1$, we get
\begin{equation}\label{3.3}
\phi_{\mathcal H}(\nabla u)\leq\frac{1}{p}\varrho_{\mathcal H}(\nabla u)\leq\frac{1}{p}\|u\|^p<1,
\end{equation}
being $1<p<q$.
Thus, by \eqref{3.1}-\eqref{3.3} and Propositions \ref{P2.1}-\ref{P2.3}, for any  $u\in W^{1,\mathcal H}_0(\Omega)$ with $\|u\|\leq1$, we obtain
\begin{align*}
J(u)&\geq\mathscr M(1)[\phi_{\mathcal H}(\nabla u)]^\theta-\varepsilon\|u\|_{q\theta}^{q\theta}-\delta_\varepsilon\|u\|_r^r
\geq\frac{\mathscr M(1)}{q^\theta}[\varrho_{\mathcal H}(\nabla u)]^\theta
-\varepsilon C_{q\theta}\|u\|^{q\theta}-\delta_\varepsilon C_r\|u\|^r\\
&\geq\left(\frac{\mathscr M(1)}{q^\theta}-\varepsilon C_{q\theta}\right)\|u\|^{q\theta}-\delta_\varepsilon C_r\|u\|^r.
\end{align*}
Therefore, choosing $\varepsilon>0$ sufficiently small so that
$$\mu_\varepsilon:=\frac{\mathscr M(1)}{q^\theta}-\varepsilon C_{q\theta}>0,
$$ 
for any $u\in W^{1,\mathcal H}_0(\Omega)$ with 
$\|u\|=\rho\in\big(0,\min\{1,[\mu_\varepsilon/(2\delta_\varepsilon C_r)]^{1/(r-q\theta)}\}\big]$, we get
$$
J(u)\geq\left(\mu_\varepsilon-\delta_\varepsilon C_r\rho^{r-q\theta}\right)\rho^{q\theta}:=\alpha>0,
$$
concluding the proof.
\end{proof}

\begin{lemma}\label{L3.2}
Let $(M_1)-(M_2)$ and $(f_1)-(f_2)$ hold true. Then, there exists $e\in W^{1,\mathcal H}_0(\Omega)$ such that $J(e)<0$ and $\|e\|>1$.
\end{lemma}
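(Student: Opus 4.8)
The plan is to exploit the Ambrosetti--Rabinowitz type condition $(f_2)$ to show that $F(x,t)$ grows at least like $|t|^{\sigma}$ for large $|t|$, where $\sigma > q\theta$, while the Kirchhoff term $\mathscr M[\phi_{\mathcal H}(\nabla u)]$ grows at most polynomially of order $\theta$ in $\varrho_{\mathcal H}(\nabla u)$, hence of order $q\theta$ in $\|u\|$. Since $\sigma > q\theta$, pushing a fixed test function out along a ray will send $J$ to $-\infty$.

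First I would integrate $(f_2)$: from $c \le \sigma F(x,t) \le t f(x,t)$ for $|t|\ge t_0$ one derives, by the standard ODE comparison $\frac{d}{dt}\big(F(x,t) t^{-\sigma}\big)\ge 0$ for $t\ge t_0$ (and symmetrically for $t\le -t_0$), the bound
\begin{equation}\label{ARgrowth}
F(x,t)\ge C_0 |t|^{\sigma} - C_1\quad\mbox{for a.e. }x\in\Omega\mbox{ and all }t\in\mathbb R,
\end{equation}
with constants $C_0>0$ and $C_1\ge 0$ depending only on $t_0$, $\sigma$ and $c$; the constant $C_1$ absorbs the behaviour on the compact set $|t|\le t_0$. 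Next, I would fix any $w\in W^{1,\mathcal H}_0(\Omega)\setminus\{0\}$, say $w\in C^\infty_0(\Omega)$ to make everything finite and explicit, and estimate $J(tw)$ for $t>1$ large. Using $(M_1)$ integrated, which gives $\mathscr M(s)\le \mathscr M(1) s^\theta$ for $s\ge 1$ (equivalently $\mathscr M(s)/s^\theta$ is non-increasing), together with Proposition \ref{P2.1}$(iv)$ applied to $\|tw\|=t\|w\|>1$, one gets
$$
\mathscr M[\phi_{\mathcal H}(\nabla(tw))]\le \mathscr M\!\left[\tfrac1p\varrho_{\mathcal H}(t\nabla w)\right]\le \mathscr M(1)\left(\tfrac1p\right)^{\theta}\big[\varrho_{\mathcal H}(t\nabla w)\big]^{\theta}\le \frac{\mathscr M(1)}{p^\theta}\,t^{q\theta}\|w\|^{q\theta}
$$
for $t$ large enough. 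Combining this with \eqref{ARgrowth} yields
$$
J(tw)\le \frac{\mathscr M(1)}{p^\theta}\|w\|^{q\theta} t^{q\theta} - C_0 t^{\sigma}\|w\|_\sigma^\sigma + C_1|\Omega|,
$$
and since $\sigma>q\theta$ and $\|w\|_\sigma>0$, the right-hand side tends to $-\infty$ as $t\to\infty$. Hence we may pick $t$ so large that $e:=tw$ satisfies $J(e)<0$ and simultaneously $\|e\|=t\|w\|>1$, which is exactly the claim.

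The main technical obstacle is the correct derivation and bookkeeping of the Kirchhoff bound $\mathscr M(s)\le \mathscr M(1)s^\theta$ for $s\ge1$ from $(M_1)$: one must be careful that $(M_1)$ gives $\tfrac{d}{ds}\log\mathscr M(s)\le \theta/s$ only where $\mathscr M(s)>0$, which holds for $s>0$ by $(M_2)$, and then integrate from $1$ to $s$. A secondary point is ensuring the modular estimate goes in the right direction when $\|tw\|$ crosses the threshold $1$; taking $t$ large from the outset makes $\|tw\|>1$ and lets us use Proposition \ref{P2.1}$(iv)$ cleanly, while the homogeneity $\varrho_{\mathcal H}(t\nabla w)\le t^q\varrho_{\mathcal H}(\nabla w)$ for $t\ge1$ (since $p<q$) keeps the power of $t$ under control. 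Everything else is a routine application of $(f_2)$ and the embedding $W^{1,\mathcal H}_0(\Omega)\hookrightarrow L^\sigma(\Omega)$ from Proposition \ref{P2.3}.
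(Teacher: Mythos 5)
Your proposal is correct and follows essentially the same route as the paper: both derive the Ambrosetti--Rabinowitz growth $F(x,t)\ge d_1|t|^\sigma-d_2$ from $(f_1)$--$(f_2)$, bound the Kirchhoff term via $\mathscr M(s)\le\mathscr M(1)s^\theta$ for $s\ge1$ together with the $t^{q}$-homogeneity bound on the modular, and let $\sigma>q\theta$ force $J(tw)\to-\infty$ along a ray. The only cosmetic difference is that the paper normalizes $\|\varphi\|=1$ and takes $t\ge q^{1/p}$ to guarantee $\phi_{\mathcal H}(\nabla(t\varphi))\ge1$ from the outset, whereas you simply take $t$ large; both are fine.
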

\begin{proof}
By $(f_1)$ and $(f_2)$, there exist $d_1>0$ and $d_2\geq0$ such that
\begin{equation}\label{3.4}
F(x,t)\geq d_1|t|^\sigma-d_2\quad\mbox{for a.e. }x\in\Omega\mbox{ and any }t\in\mathbb R.
\end{equation}
By integrating $(M_1)$, we have
\begin{equation}\label{3.5}
\mathscr M(t)\leq \mathscr M(1)t^\theta\quad\mbox{for any }t\geq1.
\end{equation}
While, by Proposition \ref{P2.1} for any $u\in W^{1,\mathcal H}_0(\Omega)$ with $\|u\|\geq q^{1/p}>1$, we get
\begin{equation}\label{3.6}
\phi_{\mathcal H}(\nabla u)\geq\frac{1}{q}\varrho_{\mathcal H}(\nabla u)\geq\frac{1}{q}\|u\|^p\geq1.
\end{equation}
Thus, if $\varphi\in W^{1,\mathcal H}_0(\Omega)$ with $\|\varphi\|=1$, then by \eqref{3.4}-\eqref{3.6} for any $t\geq q^{1/p}$ we have
$$
J(t\varphi)\leq \mathscr M(1)t^{q\theta}\left[\phi_{\mathcal H}(\nabla\varphi)\right]^{\theta}-t^\sigma d_1\|\varphi\|_\sigma^\sigma-d_2|\Omega|.
$$
Since $\sigma>q\theta$ by $(f_2)$, passing to the limit as $t\rightarrow\infty$ we get $J(t\varphi)\to-\infty$. Thus, the assertion follows by taking $e=t_{\infty}\varphi$, with $t_{\infty}$ sufficiently large.
\end{proof}

We recall that a functional $\mathcal F:W^{1,\mathcal H}_0(\Omega)\to\mathbb R$ fulfills the Palais-Smale condition $(PS)$ if any sequence $\{u_j\}_j\subset W^{1,\mathcal H}_0(\Omega)$ satisfying
\begin{equation}\label{3.7}
\{\mathcal F(u_j)\}_j\mbox{ is bounded and }\mathcal F'(u_j)\to 0\mbox{ in }\left(W^{1,\mathcal H}_0(\Omega)\right)^{*}\mbox{ as }j\rightarrow\infty,
\end{equation}
admits a convergent subsequence in $W^{1,\mathcal H}_0(\Omega)$. Now, we are going to verify the $(PS)$ condition for $J$.

\begin{lemma}\label{L3.3}
Let $(M_1)-(M_2)$ and $(f_1)-(f_2)$ hold true. Then, the functional $J$ verifies the $(PS)$ condition.
\end{lemma}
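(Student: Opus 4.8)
The plan is to follow the standard scheme for mountain-pass problems with a Kirchhoff term: first show that any Palais--Smale sequence is bounded in $W^{1,\mathcal H}_0(\Omega)$, then upgrade weak convergence of a subsequence to strong convergence using the $(S_+)$ property of the operator $L$ from Proposition \ref{P2.4}.

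\emph{Step 1: boundedness of a $(PS)$ sequence.} Let $\{u_j\}_j$ satisfy \eqref{3.7}, so $J(u_j)\to c$ for some $c\in\mathbb R$ and $J'(u_j)\to 0$ in the dual. I would compute the combination $J(u_j)-\frac{1}{\sigma}\langle J'(u_j),u_j\rangle$, which equals
$$
\mathscr M[\phi_{\mathcal H}(\nabla u_j)]-\frac{1}{\sigma}M[\phi_{\mathcal H}(\nabla u_j)]\langle L(u_j),u_j\rangle
-\int_\Omega\Big(F(x,u_j)-\frac{1}{\sigma}f(x,u_j)u_j\Big)dx.
$$
By $(f_2)$ the integral term is bounded below by a constant (the region $|u_j|<t_0$ contributes only a bounded amount since $f$ is Carath\'eodory and, by $(f_1)$ or $(f_1')$, has controlled growth). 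For the Kirchhoff part, note $\langle L(u_j),u_j\rangle=\varrho_{\mathcal H}(\nabla u_j)=p\,\phi_{\mathcal H}(\nabla u_j)+(q-p)\int_\Omega a(x)\frac{|\nabla u_j|^q}{q}\,dx\ge p\,\phi_{\mathcal H}(\nabla u_j)$ once one accounts for the weights carefully; more simply, writing $t_j:=\phi_{\mathcal H}(\nabla u_j)$ one has $t_j\le\langle L(u_j),u_j\rangle/p$ is not quite what is needed, so instead I would use that $\langle L(u_j),u_j\rangle\ge q\,t_j$ is false in general and rather estimate $\langle L(u_j),u_j\rangle\le q\,t_j$, together with $(M_1)$ in the form $t_jM(t_j)\le\theta\mathscr M(t_j)$, to get
$$
\mathscr M(t_j)-\frac{1}{\sigma}M(t_j)\langle L(u_j),u_j\rangle
\ge\mathscr M(t_j)-\frac{q}{\sigma}t_jM(t_j)
\ge\Big(1-\frac{q\theta}{\sigma}\Big)\mathscr M(t_j),
$$
and $1-q\theta/\sigma>0$ precisely because $\sigma>q\theta$ in $(f_2)$. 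Hence $\mathscr M[\phi_{\mathcal H}(\nabla u_j)]$ is bounded, which by $(M_2)$ (applied via \eqref{3.2} for small argument, giving a lower bound $\mathscr M(t)\ge\mathscr M(1)t^\theta$ for $t\le 1$, combined with monotonicity of $\mathscr M$) forces $\phi_{\mathcal H}(\nabla u_j)$ to be bounded, and then Proposition \ref{P2.1}(iii)--(iv) and \eqref{rhoh} give $\|u_j\|$ bounded.

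\emph{Step 2: strong convergence.} By reflexivity of $W^{1,\mathcal H}_0(\Omega)$ and Proposition \ref{P2.3}, up to a subsequence $u_j\rightharpoonup u$ in $W^{1,\mathcal H}_0(\Omega)$ and $u_j\to u$ in $L^\nu(\Omega)$ for every $\nu\in[1,p^*)$, in particular for $\nu=q\theta$ and $\nu=r$ (both lie in $(1,p^*)$ by $(f_1)$, resp. $\nu\in(p,p^*)$ by $(f_1')$). From $J'(u_j)\to 0$ and boundedness of $\{u_j\}$ we get $\langle J'(u_j),u_j-u\rangle\to 0$, i.e.
$$
M[\phi_{\mathcal H}(\nabla u_j)]\langle L(u_j),u_j-u\rangle=\int_\Omega f(x,u_j)(u_j-u)\,dx+o(1).
$$
The right-hand side tends to $0$: by $(f_1)$ (or $(f_1')$) and H\"older, $\big|\int_\Omega f(x,u_j)(u_j-u)\big|\le C(\|u_j\|_{q\theta}^{q\theta-1}+\|u_j\|_r^{r-1})\|u_j-u\|_{\text{appropriate}}\to0$ using the strong $L^\nu$ convergence. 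Now $\phi_{\mathcal H}(\nabla u_j)$ is bounded; I would split into two cases according to whether $\liminf_j\phi_{\mathcal H}(\nabla u_j)=0$ or is positive. In the first case, along a further subsequence $\phi_{\mathcal H}(\nabla u_j)\to0$, so by Proposition \ref{P2.1}(v) $\nabla u_j\to0$ in $L^{\mathcal H}$, hence $u_j\to0=u$ strongly and we are done. In the second case, $\phi_{\mathcal H}(\nabla u_j)\ge\tau>0$ along the subsequence, so $(M_2)$ gives $M[\phi_{\mathcal H}(\nabla u_j)]\ge\kappa(\tau)>0$; dividing, we obtain $\langle L(u_j),u_j-u\rangle\to0$, and since $u_j\rightharpoonup u$ also $\langle L(u),u_j-u\rangle\to0$, whence $\limsup_j\langle L(u_j)-L(u),u_j-u\rangle\le0$. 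Proposition \ref{P2.4} then yields $u_j\to u$ strongly in $W^{1,\mathcal H}_0(\Omega)$, completing the proof.

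\emph{Main obstacle.} The delicate point is Step 1: one must combine $(M_1)$ with the Ambrosetti--Rabinowitz-type condition $(f_2)$ correctly, keeping track of the fact that $\langle L(u),u\rangle=\varrho_{\mathcal H}(\nabla u)$ is comparable to but not equal to $p\,\phi_{\mathcal H}(\nabla u)$ or $q\,\phi_{\mathcal H}(\nabla u)$, and using $\sigma>q\theta$ (rather than merely $\sigma>p$) to absorb the Kirchhoff term; a secondary subtlety in Step 2 is that $M$ may vanish at $0$, so one cannot simply divide by $M[\phi_{\mathcal H}(\nabla u_j)]$ without the case distinction, which is exactly where hypothesis $(M_2)$ is used.
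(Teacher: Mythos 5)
Your strategy is the same as the paper's: the test combination $J(u_j)-\frac{1}{\sigma}\langle J'(u_j),u_j\rangle$ with $\sigma>q\theta$ for boundedness, then the dichotomy on the limit of $\phi_{\mathcal H}(\nabla u_j)$ together with the $(S_+)$ property of $L$ for strong convergence. Step~2 is correct as written (the case $\liminf_j\phi_{\mathcal H}(\nabla u_j)=0$, the use of $(M_2)$ to avoid dividing by a vanishing Kirchhoff coefficient, and the passage to $\limsup_j\langle L(u_j)-L(u),u_j-u\rangle\le0$ all match the paper).

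There is, however, a genuine gap in Step~1, at the sentence ``Hence $\mathscr M[\phi_{\mathcal H}(\nabla u_j)]$ is bounded.'' From \eqref{3.7} you only know that $J(u_j)$ is bounded and that $|\langle J'(u_j),u_j\rangle|$ is at most the dual norm of $J'(u_j)$ times $\|u_j\|$, i.e.\ $o(1)\|u_j\|$; so your (correct) chain of inequalities only yields
\begin{equation*}
\Bigl(1-\frac{q\theta}{\sigma}\Bigr)\mathscr M(t_j)\le c_1+D+o(1)\|u_j\|,\qquad t_j:=\phi_{\mathcal H}(\nabla u_j),
\end{equation*}
which is not a uniform bound unless $\|u_j\|$ is already known to be bounded --- precisely what is to be proved. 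To close the argument you must beat the linear term on the right: by $(M_2)$ with $\tau=1$ one has $\mathscr M(t)\ge\mathscr M(1)+\kappa(t-1)$ for $t\ge1$, while $t_j\ge\varrho_{\mathcal H}(\nabla u_j)/q\ge\|u_j\|^p/q$ for $\|u_j\|\ge q^{1/p}$ by Proposition~\ref{P2.1}; hence the left-hand side grows at least like $\|u_j\|^p$ with $p>1$, which is incompatible with the $o(1)\|u_j\|$ bound if $\|u_j\|\to\infty$ along a subsequence. This contradiction argument is exactly how the paper concludes, and it is where $p>1$ enters --- a point absent from your write-up. Relatedly, your parenthetical justification for ``bounded $\mathscr M(t_j)$ forces bounded $t_j$'' invokes \eqref{3.2}, which is an estimate for $t\le1$, together with monotonicity of $\mathscr M$; monotonicity alone does not give coercivity (a bounded increasing $\mathscr M$ would defeat the argument), and it is again $(M_2)$ on $[1,\infty)$ that forces $\mathscr M(t)\to\infty$ as $t\to\infty$.
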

\begin{proof}
Let $\{u_j\}_j\subset W^{1,\mathcal H}_0(\Omega)$ be a sequence satisfying \eqref{3.7} with $\mathcal F=J$. 

We first show that $\{u_j\}_j$ is bounded in $W^{1,\mathcal H}_0(\Omega)$, arguing by contradiction. Then, there exists a subsequence, still denoted by $\{u_j\}_j$  and $n\in\mathbb{N}$ such that $\lim\limits_{j\to\infty}\|u_j\|=\infty$ and $\|u_j\|\geq q^{1/p}$ for any $j\geq n$. By $(M_2)$ with $\tau=1$, there exists $\kappa=\kappa(1)>0$ such that, thanks to \eqref{3.6}, we have
\begin{equation}\label{3.8}
M\left[\phi_{\mathcal H}(\nabla u_j)\right]\geq\kappa\text{ for any }j\geq n.
\end{equation}
Thus, according to $(M_1)$, $(f_2)$ and \eqref{3.8}, we get
\begin{align}\label{3.9}
J(u_j)-\frac{1}{\sigma}\langle J'(u_j), u_j\rangle
=&\mathscr M\left[\phi_{\mathcal H}(\nabla u_j)\right]
-\frac{1}{\sigma}M\left[\phi_{\mathcal H}(\nabla u_j)\right]\varrho_{\mathcal H}(\nabla u_j)
-\int_{\Omega}\left[F(x,u_j)-\frac{1}{\sigma}f(x,u_j)u_j\right]dx\nonumber\\
\geq&\left(\frac{1}{\theta}-\frac{q}{\sigma}\right)M\left[\phi_{\mathcal H}(\nabla u_j)\right]\phi_{\mathcal H}(\nabla u_j)
-\int_{\Omega_j}\left[F(x,u_j)-\frac{1}{\sigma}f(x,u_j)u_j\right]^+dx\\
\geq&\left(\frac{1}{\theta}-\frac{q}{\sigma}\right)M\left[\phi_{\mathcal H}(\nabla u_j)\right]\frac{\varrho_{\mathcal H}(\nabla u_j)}{q}
-D,\nonumber
\end{align}
since $\sigma>q\theta$ by $(f_2)$, where
\begin{equation}\label{omegajay}
\Omega_j:=\left\{x\in\Omega:\,\,|u_j(x)|\leq t_0\right\}\quad\mbox{ and }\quad
D:=|\Omega|\sup_{x\in\Omega,\,|t|\leq t_0}\left[F(x,t)-\frac{1}{\sigma}f(x,t)t\right]^+<\infty,
\end{equation}
with the last inequality is consequence of $(f_1)$ and $t^+=\max\{t,0\}$ denotes the positive part of a number $t\in\mathbb R$.
Thus, by \eqref{3.7} there exist $c_1$, $c_2>0$ such that \eqref{3.8}-\eqref{3.9} and Proposition \ref{P2.1} yield at once that as $j\rightarrow\infty$,
$$
c_1+c_2\|u_j\|+o(1)\geq\left(\frac{1}{\theta}-\frac{q}{\sigma}\right)\frac{\kappa}{q}\|u_j\|^p-D
$$
giving the desired contradiction, since $p>1$.

Hence, $\{u_j\}_j$ is bounded in $W^{1,\mathcal H}_0(\Omega)$.
By Propositions \ref{P2.1}-\ref{P2.3}, the reflexivity of $W^{1,\mathcal H}_0(\Omega)$ and \cite[Theorem 4.9]{B}, there exists a subsequence, still denoted by $\{u_j\}_j$, and $u\in W^{1,\mathcal H}_0(\Omega)$ such that
\begin{equation}\label{3.10}
\begin{gathered}
u_j\rightharpoonup u\mbox{ in }W^{1,\mathcal H}_0(\Omega),\qquad \nabla u_j\rightharpoonup\nabla u\mbox{ in }\left[L^{\mathcal H}(\Omega)\right]^N, \qquad\phi_{\mathcal H}(\nabla u_j)\rightarrow\ell,\\
u_j\to u\mbox{ in }L^\nu(\Omega),\qquad u_j(x)\rightarrow u(x)\mbox{ a.e. in }\Omega,
\end{gathered}
\end{equation}
as $j\to\infty$, with $\nu\in[1,p^*)$.
Of course, if $\ell=0$ then, since $\phi_{\mathcal H}(v)\geq\rho_{\mathcal H}(v)/q\geq0$ for any $v\in W^{1,\mathcal H}_0(\Omega)$, by Proposition \ref{P2.1} we have $u_j\to0$ in $W^{1,\mathcal H}_0(\Omega)$. Hence, let us suppose $\ell>0$.

By $(f_1)$ with $\varepsilon=1$, the H\"older inequality, the boundedness of $\{u_j\}_j$, \eqref{3.10} applied with $\nu=r$ and $\nu=q\theta$ thanks to $(M_1)$, we obtain
\begin{equation}\label{3.11}
\begin{aligned}
\left|\int_\Omega f(x,u_j)(u_j-u)dx\right|
&\leq\int_\Omega\left(q\theta|u_j|^{q\theta-1}+r\delta_1|u_j|^{r-1}\right)|u_j-u|dx\\
&\leq C\left(\|u_j-u\|_{q\theta}+\|u_j-u\|_r\right)\to 0
\end{aligned}
\end{equation}
as $j\to\infty$, for a suitable $C>0$. Thus, by \eqref{3.7}, \eqref{3.10} and \eqref{3.11}, we get
\begin{equation}\label{3.12}
\begin{aligned}
o(1)=\langle J'(u_j),u_j-u\rangle&=M\left[\phi_{\mathcal H}(\nabla u_j)\right]\langle L(u_j),u_j-u\rangle
-\int_\Omega f(x,u_j)(u_j-u)dx\\
&=M(\ell)\langle L(u_j),u_j-u\rangle+o(1)
\end{aligned}
\end{equation}
as $j\to\infty$.
By the H\"older inequality, \eqref{rhoh} and Proposition \ref{P2.1}, we see that functional
$$
G:g\in\left[L^{\mathcal H}(\Omega)\right]^N\mapsto\int_\Omega\left(|\nabla u|^{p-2}\nabla u+a(x)|\nabla u|^{q-2}\nabla u\right)\cdot g\,dx
$$
is linear and bounded.
Hence, by \eqref{3.10} we have
\begin{equation}\label{3.13}
\langle L(u),u_j-u\rangle
=\int_\Omega\left(|\nabla u|^{p-2}\nabla u+a(x)|\nabla u|^{q-2}\nabla u\right)\cdot(\nabla u_j-\nabla u)dx\to 0\mbox{ as }j\to\infty.
\end{equation}
Thus, combining \eqref{3.12}-\eqref{3.13} and Proposition \ref{P2.4}, since $M(\ell)>0$ by $(M_2)$, we conclude that $u_j\to u$ in $W^{1,\mathcal H}_0(\Omega)$. This completes the proof.
\end{proof}

\begin{remark}\label{maq}
The same result in Lemma \ref{L3.3} holds assuming $(f_1')$ instead of $(f_1)$.
\end{remark}

We are now ready to prove Theorem \ref{T1.1}.

\begin{proof}[Proof of Theorem \ref{T1.1}]
Since $J(0)=0$, by Lemmas \ref{L3.1}-\ref{3.3} and the mountain pass theorem, the existence of a nontrivial weak solution of \eqref{P} follows at once.
\end{proof}

In order to verify Theorem \ref{T2.2}, we use the Fountain theorem given in \cite[Theorem 3.6]{Wil} applied to the functional $J$. For this, we first need some notations. Since $W^{1,\mathcal H}_0(\Omega)$ is a reflexive and separable Banach space, there are two sequences $\{e_j\}_j\subset W^{1,\mathcal H}_0(\Omega)$ and $\{e_j^*\}_j\subset \left(W^{1,\mathcal H}_0(\Omega)\right)^*$ such that
$$
W^{1,\mathcal H}_0(\Omega)=\overline{\mathrm{span}\{e_j:\, j\in \mathbb{N}\}},\qquad  (W^{1,\mathcal H}_0(\Omega))^*=\overline{\mathrm{span}\{e_j^*:\, j\in \mathbb{N}\}}
$$
and 
\[\langle e_i^*, e_j\rangle=\begin{cases}
  1, &i=j, \\
  0, & i\neq j.
\end{cases}
\]
Then, for any $j\in\mathbb N$, we can set
\begin{equation}\label{3.15}
X_j:=\mathrm{span}\{e_j\},\qquad Y_i:=\bigoplus_{i=1}^j X_i,\qquad Z_j:=\bigoplus_{i=j}^{\infty} X_i,
\qquad\beta_j:=\sup_{u\in Z_j,\,\|u\|=1} \|u\|_r,
\end{equation}
where $r$ given in $(f_1)$.
The geometric structure of the Fountain theorem in \cite[Theorem 3.6]{Wil}, applied to an even functional $\mathcal F:W^{1,\mathcal H}_0(\Omega)\to\mathbb R$, requires to show that for any $j\in \mathbb{N}$ there exist $\rho_j>\gamma_j>0$ such that
\begin{align}\label{cond1}
&a_j:=\max_{u\in Y_j,\,\|u\|=\rho_j} \mathcal F(u)\leq 0\\
\label{cond2}
&b_j:=\inf_{u\in Z_j,\,\|u\|=\gamma_j} \mathcal F(u)\to \infty \mbox{ as } j\to \infty.
\end{align}
In order to verify \eqref{cond2}, we use an asymptotic property of $\beta_j$ proved in \cite[Lemma 7.1]{LD}.

\begin{proof}[Proof of Theorem \ref{T2.2}]
By $(f_3)$ we have that $J$ is an even functional. While, $J$ satisfies the $(PS)$ condition thanks to Remark \ref{maq}.
Thus, in order to apply the Fountain theorem in \cite[Theorem 3.6]{Wil}, for any $j\in \mathbb{N}$ we need to find $\rho_j>\gamma_j>0$ such that \eqref{cond1} and \eqref{cond2} hold true for $\mathcal F=J$.

Let us first prove \eqref{cond2}.
By $(f'_1)$ we have
\begin{equation}\label{3.1n}
|F(x,t)|\leq C\left(|t|+|t|^r\right),\quad\mbox{for a.e. }x\in\Omega\mbox{ and any }t\in\mathbb R,
\end{equation}
with a possibly new $C>0$.
By $(M_2)$ with $\tau=1$, there exists $\kappa=\kappa(1)>0$ such that, thanks to Proposition \ref{P2.1} and \eqref{3.6}, we have
\begin{equation}\label{3.18}
M\left[\phi_{\mathcal H}(\nabla u)\right]\geq\kappa,
\end{equation}
for any $u\in Z_j$ with $\|u\|\geq q^{1/p}$. Thus, by $(M_1)$, \eqref{3.6}, \eqref{3.1n}, \eqref{3.18}, the H\"older inequality, the definition of $\beta_j$ in \eqref{3.15} and the fact that $r>p$, for any $u\in Z_j$ with $\|u\|\geq q^{1/p}>1$ we obtain
\begin{equation}\label{3.19}
\begin{aligned}
J(u)&\geq\frac{1}{\theta}M[\phi_{\mathcal H}(\nabla u)]\phi_{\mathcal H}(\nabla u)-C\|u\|_{1}-C\|u\|_r^r
\geq\frac{\kappa}{q\theta}\|u\|^p
-C|\Omega|^{(r-1)/r}\|u\|_r-C\|u\|_r^r\\
&\geq\frac{\kappa}{q\theta}\|u\|^p\!-\!\beta_jC|\Omega|^{(r-1)/r}\|u\|
\!-\!\beta_j^rC\|u\|^r
\!\geq\!\left[\frac{\kappa}{q\theta}-C\left(\beta_j|\Omega|^{(r-1)/r}+\beta_j^r\right)\|u\|^{r-p}\right]
\|u\|^p.
\end{aligned}
\end{equation}
Now, let us choose
$$\gamma_j:=\left[\frac{\kappa}{2q\theta}\cdot
\frac{1}{C\left(\beta_j|\Omega|^{(r-1)/r}+\beta_j^r\right)}\right]^{1/(r-p)}
$$
such that $\gamma_j\to\infty$ as $j\to\infty$, since $\beta_j\to 0$ as $j\to\infty$ by \cite[Lemma 7.1]{LD} and $r>p$ by $(f'_1)$.
Then, by \eqref{3.19}, for any $u\in Z_j$ with $\|u\|=\gamma_j$ we get
$$
J(u)\geq\frac{\kappa}{2q\theta}\gamma_j^p\to\infty\mbox{ as }j\to\infty,
$$
which gives the validity of condition \eqref{cond2}. 

In order to prove \eqref{cond1} let us fix $j\in\mathbb N$. Since the norms are topological equivalent in $Y_j$, there exists $c(j)>0$ such that
\begin{equation}\label{3.17}
\|u\|^\sigma\leq c(j)\|u\|_\sigma^\sigma,
\end{equation}
for any $u\in Y_j$.
Also, by Proposition \ref{P2.1} for any $u\in Y_j$ with $\|u\|\geq1$ we get
$$
\phi_{\mathcal H}(\nabla u)\leq\frac{1}{p}\varrho_{\mathcal H}(\nabla u)\leq\frac{1}{p}\|u\|^q,
$$
being $1<p<q$.
From this, by \eqref{3.4}-\eqref{3.6} and \eqref{3.17}, for any $u\in Y_j$ with $\|u\|\geq q^{1/p}$ we have
$$
J(u)\leq \mathscr M(1)\left[\phi_{\mathcal H}(\nabla u)\right]^{\theta}-d_1\|u\|_\sigma^\sigma-d_2|\Omega|
\leq\frac{\mathscr M(1)}{p^{\theta}}\|u\|^{q\theta}-d_1c(j)\|u\|^\sigma-d_2|\Omega|,
$$
which yields \eqref{cond1} with $\rho_j>\max\{q^{1/p},\gamma_j\}$ sufficiently large, since $\sigma>q\theta$ by $(f_2)$.

Thus, we can apply \cite[Theorem 3.6]{Wil} to functional $J$ and we get an unbounded sequence of critical points of $J$ with unbounded energy, concluding the proof of Theorem \ref{T2.2}.

\end{proof}

\section{Proof of Theorems \ref{T3.3} and \ref{T4.4}}\label{sec4}
As in Theorem \ref{T1.1}, we apply the mountain pass theorem to prove Theorem \ref{T3.3}, starting from the geometry of $I$.

\begin{lemma}\label{L4.1}
Let $(M_1)-(M_2)$ and $(f_1)$ hold true. 
Then, there exist $\rho\in(0,1]$ and $\alpha=\alpha(\rho)>0$ such that $I(u)\geq \alpha$ for any $u\in W^{1,\mathcal H}_0(\Omega)$, with $\|u\|=\rho$.
\end{lemma}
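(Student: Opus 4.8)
The plan is to mimic the argument used for $J$ in Lemma \ref{L3.1}, adapting it to the split Kirchhoff structure of $I$. First I would record the elementary consequences of the hypotheses that were already isolated: from $(f_1)$ we get the estimate $|F(x,t)|\leq\varepsilon|t|^{q\theta}+\delta_\varepsilon|t|^r$ (this is \eqref{3.1}), and from integrating $(M_1)$ together with the positivity of $M$ on $(0,\infty)$ guaranteed by $(M_2)$ we get $\mathscr M(t)\geq\mathscr M(1)t^\theta$ for $t\in[0,1]$ (this is \eqref{3.2}). These are stated for all $t\in[0,1]$, hence apply to both arguments $\|\nabla u\|_p^p$ and $\|\nabla u\|_{q,a}^q$ once we know they lie in $[0,1]$.

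The next step is to control the two modular-type quantities by $\|u\|$. For $u\in W^{1,\mathcal H}_0(\Omega)$ with $\|u\|\leq 1$, Proposition \ref{P2.1}(iii) gives $\varrho_{\mathcal H}(\nabla u)\leq\|u\|^p\leq 1$, and since both $\|\nabla u\|_p^p$ and $\|\nabla u\|_{q,a}^q$ are bounded above by $\varrho_{\mathcal H}(\nabla u)$ (by the very definition \eqref{rhoh} of $\varrho_{\mathcal H}$), both lie in $[0,1]$. Therefore \eqref{3.2} applies to each, giving
$$
\frac{1}{p}\mathscr M(\|\nabla u\|_p^p)+\frac{1}{q}\mathscr M(\|\nabla u\|_{q,a}^q)\geq\frac{\mathscr M(1)}{q}\left(\|\nabla u\|_p^p\right)^\theta+\frac{\mathscr M(1)}{q}\left(\|\nabla u\|_{q,a}^q\right)^\theta\geq\frac{\mathscr M(1)}{q}\left(\|\nabla u\|_p^p+\|\nabla u\|_{q,a}^q\right)^\theta,
$$
where in the last step I would use the superadditivity inequality $x^\theta+y^\theta\geq(x+y)^\theta$ valid for $x,y\geq 0$ and $\theta\geq 1$ (recall $\theta\geq 1$ from $(M_1)$). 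The bracket on the right is exactly $\varrho_{\mathcal H}(\nabla u)$, so the Kirchhoff part of $I(u)$ is bounded below by $\tfrac{\mathscr M(1)}{q}\,\varrho_{\mathcal H}(\nabla u)^\theta\geq\tfrac{\mathscr M(1)}{q}\,\|u\|^{q\theta}$, again by Proposition \ref{P2.1}(iii) raised to the power $\theta$ (and using $\varrho_{\mathcal H}(\nabla u)\leq 1$ together with $q\theta\geq p\theta\geq\cdots$; more simply $\varrho_{\mathcal H}(\nabla u)^\theta\geq(\|u\|^q)^\theta$ is false, so one instead uses $\varrho_{\mathcal H}(\nabla u)\geq\|u\|^q$ from part (iii) — wait, that bound goes the wrong way; the correct route is $\varrho_{\mathcal H}(\nabla u)\geq\|u\|^q$ is also not (iii)). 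Let me be careful: Proposition \ref{P2.1}(iii) reads $\|u\|^q\leq\varrho_{\mathcal H}(\nabla u)\leq\|u\|^p$ for $\|u\|<1$, so $\varrho_{\mathcal H}(\nabla u)\geq\|u\|^q$, hence $\varrho_{\mathcal H}(\nabla u)^\theta\geq\|u\|^{q\theta}$, which is what we want.

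Finally I would bound the nonlinear term: by \eqref{3.1} and Proposition \ref{P2.3},
$$
\int_\Omega F(x,u)\,dx\leq\varepsilon\|u\|_{q\theta}^{q\theta}+\delta_\varepsilon\|u\|_r^r\leq\varepsilon C_{q\theta}\|u\|^{q\theta}+\delta_\varepsilon C_r\|u\|^r,
$$
using that $q\theta<p^*$ (from $(M_1)$, $\theta<p^*/q$) and $r<p^*$ (from $(f_1)$) so both embeddings are available. Combining, for $\|u\|\leq 1$,
$$
I(u)\geq\left(\frac{\mathscr M(1)}{q}-\varepsilon C_{q\theta}\right)\|u\|^{q\theta}-\delta_\varepsilon C_r\|u\|^r.
$$
Choosing $\varepsilon>0$ small so that $\mu_\varepsilon:=\mathscr M(1)/q-\varepsilon C_{q\theta}>0$, and then $\rho\in\bigl(0,\min\{1,[\mu_\varepsilon/(2\delta_\varepsilon C_r)]^{1/(r-q\theta)}\}\bigr]$, we get $I(u)\geq(\mu_\varepsilon-\delta_\varepsilon C_r\rho^{r-q\theta})\rho^{q\theta}=:\alpha>0$ for every $u$ with $\|u\|=\rho$, exactly as in Lemma \ref{L3.1}. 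The only genuinely new ingredient compared to Lemma \ref{L3.1} is the passage from the two separate Kirchhoff terms to a single modular via superadditivity of $t\mapsto t^\theta$; everything else is a transcription. I expect that superadditivity step (and checking that both arguments indeed sit in $[0,1]$) to be the one point deserving care, though it is still elementary.
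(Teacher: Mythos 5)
Your overall strategy is the paper's, but the one step you yourself single out as ``the genuinely new ingredient'' is false as stated: for $\theta\geq 1$ the map $t\mapsto t^\theta$ is convex and vanishes at $0$, hence $(x+y)^\theta\geq x^\theta+y^\theta$ for $x,y\geq0$ --- the \emph{reverse} of the inequality $x^\theta+y^\theta\geq(x+y)^\theta$ you invoke, which holds only for $0<\theta\leq1$ (take $x=y=1$, $\theta=2$ to see it fail for $\theta>1$). So your passage from the two separate Kirchhoff terms to a single power of the modular $\varrho_{\mathcal H}(\nabla u)$ does not go through as written, and this is precisely the point where the split structure of $I$ differs from $J$.

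The repair is what the paper actually does: use the two-point Jensen (power-mean) inequality $x^\theta+y^\theta\geq 2^{1-\theta}(x+y)^\theta$, which is the correct consequence of convexity. This only costs a factor $2^{\theta-1}$, so the Kirchhoff part of $I(u)$ is bounded below by $\frac{\mathscr M(1)}{q2^{\theta-1}}\left[\varrho_{\mathcal H}(\nabla u)\right]^\theta\geq\frac{\mathscr M(1)}{q2^{\theta-1}}\|u\|^{q\theta}$ rather than by $\frac{\mathscr M(1)}{q}\|u\|^{q\theta}$. Since the remainder of your argument only needs $\mu_\varepsilon:=\frac{\mathscr M(1)}{q2^{\theta-1}}-\varepsilon C_{q\theta}>0$ for $\varepsilon$ small, everything else you wrote --- checking that $\|\nabla u\|_p^p$ and $\|\nabla u\|_{q,a}^q$ lie in $[0,1]$ via \eqref{rhoh} and Proposition \ref{P2.1}, the estimate on $\int_\Omega F(x,u)\,dx$, and the final choice of $\rho$ and $\alpha$ --- is correct and coincides with the paper's proof after this one substitution.
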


\begin{proof}
Let us first consider $u\in W^{1,\mathcal H}_0(\Omega)$ with $\|u\|\leq1$.
By Proposition \ref{P2.1} and \eqref{rhoh}, also $\|\nabla u\|_p\leq1$ and $\|\nabla u\|_{q,a}\leq1$.
Thus, by \eqref{3.1}, \eqref{3.2}, Propositions \ref{P2.1}-\ref{P2.3} and the Jensen inequality we have
\begin{align*}
I(u)&\geq\frac{\mathscr M(1)}{p}\|\nabla u\|_p^{p\theta}+\frac{\mathscr M(1)}{q}\|\nabla u\|_{q,a}^{q\theta}-\varepsilon\|u\|_{q\theta}^{q\theta}-\delta_\varepsilon\|u\|_r^r\\
&\geq\frac{\mathscr M(1)}{q2^{\theta-1}}[\varrho_{\mathcal H}(\nabla u)]^\theta
-\varepsilon C_{q\theta}\|u\|^{q\theta}-\delta_\varepsilon C_r\|u\|^r\\
&\geq\left(\frac{\mathscr M(1)}{q2^{\theta-1}}-\varepsilon C_{q\theta}\right)\|u\|^{q\theta}-\delta_\varepsilon C_r\|u\|^r.
\end{align*}
Therefore, choosing $\varepsilon>0$ sufficiently small so that
$$\mu_\varepsilon:=\frac{\mathscr M(1)}{q2^{\theta-1}}-\varepsilon C_{q\theta}>0,
$$ 
for any $u\in W^{1,\mathcal H}_0(\Omega)$ with 
$\|u\|=\rho\in\big(0,\min\{1,1/K_p^p,1/K_q^q,[\mu_\varepsilon/(2\delta_\varepsilon C_r)]^{1/(r-q\theta)}\}\big]$, we obtain
$$
I(u)\geq\left(\mu_\varepsilon-\delta_\varepsilon C_r\rho^{r-q\theta}\right)\rho^{q\theta}:=\alpha>0,
$$
concluding the proof.
\end{proof}

\begin{lemma}\label{L4.2}
Let $(M_1)-(M_2)$ and $(f_1)-(f_2)$ hold true.
Then, there exists $e\in W^{1,\mathcal H}_0(\Omega)$ such that $I(e)<0$, $\|\nabla e\|_p\geq1$ and $\|\nabla e\|_{q,a}\geq1$.
\end{lemma}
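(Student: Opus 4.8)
Lemma \ref{L4.2} is the mountain-pass "hitting a negative energy level far from the origin" lemma, now for the functional $I$ associated to problem \eqref{P2}. The plan mirrors the proof of Lemma \ref{L3.2}, with the extra care demanded by the two separate Kirchhoff terms $\frac1p\mathscr M(\|\nabla u\|_p^p)$ and $\frac1q\mathscr M(\|\nabla u\|_{q,a}^q)$ and by the fact that $\|\cdot\|_{q,a}$ is only a seminorm.

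\begin{proof}
As in the proof of Lemma \ref{L3.2}, combining $(f_1)$ and $(f_2)$ we obtain constants $d_1>0$ and $d_2\geq0$ such that
\begin{equation}\label{4.star}
F(x,t)\geq d_1|t|^\sigma-d_2\qquad\mbox{for a.e. }x\in\Omega\mbox{ and any }t\in\mathbb R.
\end{equation}
Integrating $(M_1)$ yields $\mathscr M(s)\leq\mathscr M(1)s^\theta$ for any $s\geq1$, as in \eqref{3.5}.
Now fix $\varphi\in C^\infty_0(\Omega)\subset W^{1,\mathcal H}_0(\Omega)$ with $\varphi\not\equiv0$; rescaling we may assume $\|\nabla\varphi\|_p\geq1$ and $\|\nabla\varphi\|_{q,a}\geq1$ (such a $\varphi$ exists: pick any nonzero $\varphi_0\in C^\infty_0(\Omega)$ and replace it by $\lambda\varphi_0$ with $\lambda$ large; note that if $\|\nabla\varphi_0\|_{q,a}=0$ one must instead choose the test function supported where $a(\cdot)$ is positive, which is possible whenever $\varrho_{\mathcal H}$ is a genuine modular, i.e. $\|\varphi\|_\sigma>0$ still holds). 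For $t\geq1$ we then have $\|t\nabla\varphi\|_p^p=t^p\|\nabla\varphi\|_p^p\geq1$ and $\|t\nabla\varphi\|_{q,a}^q=t^q\|\nabla\varphi\|_{q,a}^q\geq1$, so that
$$
I(t\varphi)\leq\frac{\mathscr M(1)}{p}\,t^{p\theta}\|\nabla\varphi\|_p^{p\theta}
+\frac{\mathscr M(1)}{q}\,t^{q\theta}\|\nabla\varphi\|_{q,a}^{q\theta}
-d_1 t^\sigma\|\varphi\|_\sigma^\sigma+d_2|\Omega|.
$$
Since $p<q$ we have $p\theta<q\theta$, so the dominant term among the Kirchhoff contributions as $t\to\infty$ is $t^{q\theta}$; and $\sigma>q\theta$ by $(f_2)$, while $\|\varphi\|_\sigma>0$ because $\varphi\not\equiv0$. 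Hence the right-hand side tends to $-\infty$ as $t\to\infty$. Choosing $t_\infty$ large enough that both $I(t_\infty\varphi)<0$ and $t_\infty\geq1$, and setting $e:=t_\infty\varphi$, we obtain $I(e)<0$, $\|\nabla e\|_p=t_\infty\|\nabla\varphi\|_p\geq1$ and $\|\nabla e\|_{q,a}=t_\infty\|\nabla\varphi\|_{q,a}\geq1$, which is the assertion.
\end{proof}

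\medskip

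The one genuinely delicate point, and the main obstacle, is the very first choice: guaranteeing a test function $\varphi$ with $\|\nabla\varphi\|_{q,a}\geq1$ \emph{and} $\|\varphi\|_\sigma>0$ simultaneously. If $\{a=0\}$ has positive measure one cannot simply rescale an arbitrary $C^\infty_0$ function, since $\|\nabla\varphi\|_{q,a}$ may vanish identically on it; the resolution is to pick $\varphi$ supported in a ball where $\mathrm{ess\,inf}\,a>0$ (or, more robustly, to note that if $\|\nabla e\|_{q,a}<1$ for the natural choice then the term $\frac1q\mathscr M(\|\nabla e\|_{q,a}^q)$ is simply bounded by $\frac1q\mathscr M(1)$ and contributes only a harmless constant, so that the divergence of $I(t\varphi)$ is driven entirely by the $t^{p\theta}$ term against $t^\sigma$, using $\sigma>q\theta>p\theta$). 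Everything else — the bound \eqref{4.star}, the estimate \eqref{3.5} on $\mathscr M$, and the comparison of powers of $t$ — is routine and parallel to Lemma \ref{L3.2}.
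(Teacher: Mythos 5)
Your proof is correct and follows essentially the same route as the paper's: the lower bound $F(x,t)\geq d_1|t|^\sigma-d_2$, the estimate $\mathscr M(s)\leq\mathscr M(1)s^\theta$ for $s\geq1$, and the comparison $\sigma>q\theta>p\theta$ along the ray $t\varphi$. The only difference is that you explicitly address the existence of a starting $\varphi$ with $\|\nabla\varphi\|_{q,a}\geq1$ (a point the paper silently assumes, and which does require $a\not\equiv0$), and your sign $+d_2|\Omega|$ is in fact the correct one where the paper writes $-d_2|\Omega|$; neither affects the conclusion.
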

\begin{proof}
If $\varphi\in W^{1,\mathcal H}_0(\Omega)$ with $\|\nabla\varphi\|_p\geq1$ and $\|\nabla\varphi\|_{q,a}\geq1$, then by \eqref{3.4}-\eqref{3.5} for any $t\geq 1$ we have
$$
I(t\varphi)\leq \frac{\mathscr M(1)}{p}t^{p\theta}\|\nabla\varphi\|_p^{p\theta}
+\frac{\mathscr M(1)}{q}t^{q\theta}\|\nabla\varphi\|_{q,a}^{q\theta}
-t^\sigma d_1\|\varphi\|_\sigma^\sigma-d_2|\Omega|.
$$
Since $\sigma>q\theta>p\theta$ by $(f_2)$, passing to the limit as $t\rightarrow\infty$ we get $I(t\varphi)\to-\infty$. Thus, the assertion follows by taking $e=t_{\infty}\varphi$, with $t_{\infty}$ sufficiently large.
\end{proof}

The verification of the $(PS)$ condition for $I$ is fairly delicate. Indeed, in the functional $I$ we must handle two Kirchhoff coefficients, with $M$ possibly degenerate, that is verifying $M(0)=0$.

\begin{lemma}\label{L4.3}
Let  $(M_1)-(M_2)$ and $(f_1)-(f_2)$ hold true. Then, the functional $I$ verifies the $(PS)$ condition.
\end{lemma}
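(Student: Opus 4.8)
The plan is to mimic the structure of the proof of Lemma \ref{L3.3}, but to cope carefully with the two separate Kirchhoff coefficients $M(\|\nabla u\|_p^p)$ and $M(\|\nabla u\|_{q,a}^q)$, either of which may degenerate to zero in the limit. Let $\{u_j\}_j$ satisfy \eqref{3.7} with $\mathcal F = I$. First I would establish boundedness of $\{u_j\}_j$ by contradiction: assume $\|u_j\|\to\infty$, so that by \eqref{3.6} (applied to $\|\nabla u_j\|_p^p$ and $\|\nabla u_j\|_{q,a}^q$ via Proposition \ref{P2.1} and \eqref{rhoh}) eventually $\|\nabla u_j\|_p^p\geq 1$ and $\|\nabla u_j\|_{q,a}^q\geq 1$, whence $(M_2)$ with $\tau=1$ gives $M(\|\nabla u_j\|_p^p)\geq\kappa$ and $M(\|\nabla u_j\|_{q,a}^q)\geq\kappa$. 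Then, exactly as in \eqref{3.9}, compute
$$
I(u_j)-\frac{1}{\sigma}\langle I'(u_j),u_j\rangle
\geq\left(\frac{1}{\theta}-\frac{q}{\sigma}\right)\left[\frac{1}{p}M(\|\nabla u_j\|_p^p)\|\nabla u_j\|_p^p+\frac{1}{q}M(\|\nabla u_j\|_{q,a}^q)\|\nabla u_j\|_{q,a}^q\right]-D,
$$
using $(M_1)$ in the form $tM(t)\leq\theta\mathscr M(t)$ on each term and $(f_1)$–$(f_2)$ to bound the $f$-part by a constant $D$ on the sublevel set $\{|u_j|\leq t_0\}$. Since $\|\nabla u_j\|_p^p+\|\nabla u_j\|_{q,a}^q=\varrho_{\mathcal H}(\nabla u_j)\geq\|u_j\|^p$ by Proposition \ref{P2.1}, the right side grows like $\kappa'\|u_j\|^p$, while the left side is $O(1)+o(1)\|u_j\|$; since $p>1$ this is a contradiction. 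Hence $\{u_j\}_j$ is bounded.

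Next, by reflexivity extract a subsequence with $u_j\rightharpoonup u$ in $W^{1,\mathcal H}_0(\Omega)$, $u_j\to u$ in $L^\nu(\Omega)$ for $\nu\in[1,p^*)$, $u_j(x)\to u(x)$ a.e., and — crucially — along a further subsequence $\|\nabla u_j\|_p^p\to\ell_1$ and $\|\nabla u_j\|_{q,a}^q\to\ell_2$ for some $\ell_1,\ell_2\geq 0$. Exactly as in \eqref{3.11}, $(f_1)$ (or Remark \ref{maq} for $(f_1')$) together with the strong $L^\nu$ convergence yields $\int_\Omega f(x,u_j)(u_j-u)\,dx\to 0$, so from \eqref{3.7},
$$
o(1)=\langle I'(u_j),u_j-u\rangle
=M(\|\nabla u_j\|_p^p)\langle A_p(u_j),u_j-u\rangle+M(\|\nabla u_j\|_{q,a}^q)\langle B_q(u_j),u_j-u\rangle+o(1),
$$
where $A_p(u)=-\Delta_p u$ and $B_q(u)=-\mathrm{div}(a(x)|\nabla u|^{q-2}\nabla u)$ denote the two monotone operators, and by continuity of $M$ the coefficients converge to $M(\ell_1)$, $M(\ell_2)$. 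The key point is that $\langle A_p(u),u_j-u\rangle\to0$ and $\langle B_q(u),u_j-u\rangle\to0$ by weak convergence of $\nabla u_j$ (the maps $g\mapsto\int|\nabla u|^{p-2}\nabla u\cdot g$ and $g\mapsto\int a|\nabla u|^{q-2}\nabla u\cdot g$ are bounded linear functionals on $[L^{\mathcal H}(\Omega)]^N$, arguing as for \eqref{3.13}), so
$$
M(\ell_1)\langle A_p(u_j)-A_p(u),u_j-u\rangle+M(\ell_2)\langle B_q(u_j)-B_q(u),u_j-u\rangle\to0.
$$
Since each bracket is nonnegative by monotonicity of $A_p$ and $B_q$, if $M(\ell_1)>0$ and $M(\ell_2)>0$ both brackets tend to zero, and then summing them gives $\limsup_j\langle L(u_j)-L(u),u_j-u\rangle\leq0$, so Proposition \ref{P2.4} yields $u_j\to u$ in $W^{1,\mathcal H}_0(\Omega)$.

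The main obstacle — and the reason this lemma is more delicate than Lemma \ref{L3.3} — is the degenerate case, where $M(\ell_1)=0$ or $M(\ell_2)=0$, which by $(M_2)$ forces $\ell_1=0$ or $\ell_2=0$ respectively. I would handle it as follows: if $\ell_1=0$ then $\|\nabla u_j\|_p\to0$, hence $\nabla u_j\to0$ strongly in $[L^p(\Omega)]^N$ and $u$ has $\nabla u=0$ a.e. (so $u=0$); one then works only with the $q$-term. If $\ell_2=0$ then $\|\nabla u_j\|_{q,a}\to0$, and the $B_q$-contribution drops out, so the identity above reduces to $M(\ell_1)\langle A_p(u_j)-A_p(u),u_j-u\rangle\to0$ with $M(\ell_1)>0$, giving $\|\nabla(u_j-u)\|_p\to0$; combined with $\|\nabla u_j\|_{q,a}\to0=\|\nabla u\|_{q,a}$ and the equivalence in Proposition \ref{P2.1}/\eqref{rhoh} this upgrades to $u_j\to u$ in $W^{1,\mathcal H}_0(\Omega)$. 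The symmetric situation $\ell_1=0$ is analogous, using that $u=0$ and $\|\nabla u_j\|_{q,a}\to0$ together with $\varrho_{\mathcal H}(\nabla u_j)=\|\nabla u_j\|_p^p+\|\nabla u_j\|_{q,a}^q\to0$ forces $\|u_j\|\to0$ by Proposition \ref{P2.1}(v). In every case $u_j\to u$ strongly, so $I$ satisfies $(PS)$; by Remark \ref{maq}-type reasoning the same argument works verbatim under $(f_1')$, which is what Theorem \ref{T4.4} needs.
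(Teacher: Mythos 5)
Your overall strategy matches the paper's, and the second half of your argument is sound, but there is a genuine error in the boundedness step. You claim that $\|u_j\|\to\infty$ forces, via \eqref{3.6}, that \emph{both} $\|\nabla u_j\|_p^p\geq 1$ and $\|\nabla u_j\|_{q,a}^q\geq 1$ eventually, so that $(M_2)$ applies to both Kirchhoff coefficients. This is false: Proposition \ref{P2.1} only gives $\|\nabla u_j\|_p^p+\|\nabla u_j\|_{q,a}^q=\varrho_{\mathcal H}(\nabla u_j)\to\infty$, and one of the two summands may remain bounded or even vanish identically (e.g.\ if the gradients concentrate on $\{a=0\}$, then $\|\nabla u_j\|_{q,a}=0$ for all $j$). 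Since $M$ is allowed to be degenerate ($M(0)=0$), you then have no lower bound on $M(\|\nabla u_j\|_{q,a}^q)$, and your estimate ``the right side grows like $\kappa'\|u_j\|^p$'' does not follow from the displayed inequality. This is precisely where the paper's proof does extra work: it splits into the case where both seminorms diverge (your argument then works) and the case where only one diverges while the other stays bounded; in the latter case only the diverging term carries a coefficient $\geq\kappa$, and one must check separately that this single term still dominates $c_2\|u_j\|$, which is the content of \eqref{4.2} (there $\|u_j\|\leq(\|\nabla u_j\|_p^p+\|\nabla u_j\|_{q,a}^q)^{1/p}$ is comparable to the diverging seminorm up to a bounded error, so $p>1$ still yields the contradiction). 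Your proof can be repaired along these lines, but as written the key inequality is unjustified.

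The remainder of your plan is correct and, in the degenerate subcases, takes a route different from the paper's that is worth noting. For $\ell_1=0$ you deduce $u=0$ and read off $M(\ell_2)\ell_2=0$ from the surviving $q$-term, forcing $\ell_2=0$ and hence $\varrho_{\mathcal H}(\nabla u_j)\to0$; the paper instead rules out $\ell_p=0$, $\ell_q>0$ by a longer argument through Simon's inequality and a.e.\ convergence (Subcase \ref{case2}). Your version is shorter, at the price of invoking the Poincar\'e-type property of $\|\cdot\|=\|\nabla\cdot\|_{\mathcal H}$ on $W^{1,\mathcal H}_0(\Omega)$ to get $u=0$. For $\ell_2=0$, $\ell_1>0$ you do need to justify two points you state without proof: that $\langle A_p(u_j)-A_p(u),u_j-u\rangle\to0$ implies $\nabla u_j\to\nabla u$ in $[L^p(\Omega)]^N$ (this is Simon's inequality \eqref{simon}, exactly as in the paper's Subcase \ref{case3}), and that $\|\nabla u\|_{q,a}=0$, which follows from weak lower semicontinuity of the seminorm $\|\cdot\|_{q,a}$ (the paper obtains it via a.e.\ convergence on $\Omega\setminus A$). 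With those details supplied, and the boundedness step corrected, your proof goes through.
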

\begin{proof}
Let $\{u_j\}_j\subset W^{1,\mathcal H}_0(\Omega)$ be a sequence satisfying \eqref{3.7} with $\mathcal F=I$. 

We first show that $\{u_j\}_j$ is bounded in $W^{1,\mathcal H}_0(\Omega)$, arguing by contradiction.
Then, going to a subsequence, still denoted by $\{u_j\}_j$, we have $\lim\limits_{j\to\infty}\|u_j\|=\infty$ and there exists $n\in\mathbb{N}$ such that $\|u_j\|\geq1$ for any $j\geq n$ and thanks to Proposition \ref{P2.1} we also have $\lim\limits_{j\to\infty}\varrho_{\mathcal H}(\nabla u_j)=\infty$. 
Hence, by \eqref{rhoh} either the sequence $\{|\nabla u_j|\}_j$ diverges both in~$L^p(\Omega)$ and in~$L^q_a(\Omega)$, or $\{|\nabla u_j|\}_j$ diverges in one space and it is bounded in the other. Suppose that the first case occurs. Then, up to going to another subsequence, we have
\begin{equation}\label{infty}
\lim_{j\to\infty}\|\nabla u_j\|_p=\infty,\qquad
\|\nabla u_j\|_p\ge1,\qquad
\lim_{j\to\infty}\|\nabla u_j\|_{q,a}=\infty,\qquad
\|\nabla u_j\|_{q,a}\ge1,
\end{equation}
for any $j\geq n$.
By $(M_2)$, with $\tau=1$, there exists $\kappa>0$ such that
\begin{equation}\label{mk}
M(\|\nabla u_j\|_p^p)\geq \kappa\quad\mbox{and}\quad
M(\|\nabla u_j\|_{q,a}^q)\geq \kappa,\quad\mbox{for any }j\geq n.
\end{equation}
Thus, by $(M_1)$, $(f_2)$ and \eqref{mk}, we get
\begin{align}\label{n3.9}
I(u_j)-\frac{1}{\sigma}\langle I'(u_j), u_j\rangle
\!=&\frac{1}{p}\mathscr M(\|\nabla u_j\|_p^p)\!+\!\frac{1}{q}\mathscr M(\|\nabla u_j\|_{q,a}^q)
\!-\!\frac{1}{\sigma}\left[M(\|\nabla u_j\|_p^p)\|\nabla u_j\|_p^p\!+\!
M(\|\nabla u_j\|_{q,a}^q)\|\nabla u_j\|_{q,a}^q\right]\nonumber\\
&-\int_{\Omega}\left[F(x,u_j)-\frac{1}{\sigma}f(x,u_j)u_j\right]dx\nonumber\\
\geq&\left(\frac{1}{p\theta}-\frac{1}{\sigma}\right)M(\|\nabla u_j\|_p^p)\|\nabla u_j\|_p^p
+\left(\frac{1}{q\theta}-\frac{1}{\sigma}\right)M(\|\nabla u_j\|_{q,a}^q)\|\nabla u_j\|_{q,a}^q\\
&-\int_{\Omega_j}\left[F(x,u_j)-\frac{1}{\sigma}f(x,u_j)u_j\right]^+dx\nonumber\\
\geq&\left(\frac{1}{q\theta}-\frac{1}{\sigma}\right)\kappa\varrho_{\mathcal H}(\nabla u_j)
-D,\nonumber
\end{align}
being $p\theta<q\theta<\sigma$ by $(f_2)$, with $\Omega_j$ and $D$ defined as in \eqref{omegajay}. Hence, by \eqref{3.7}
there exist  $c_1$, $c_2>0$ such that \eqref{n3.9} and Proposition \ref{P2.1} imply
\begin{equation}\label{4bis}
c_1+c_2\|u_j\|+o(1)\ge\left(\frac{1}{q\theta}-\frac{1}{\sigma}\right)\kappa\|u_j\|^p-D,
\end{equation}
as $j\to\infty$, giving the desired contradiction since $p>1$.

It remains to consider the latter case, that is when $\{|\nabla u_j|\}_j$ diverges in one space, but is bounded in the other. Suppose that going to a further subsequence
\begin{equation}\label{infty2}
\lim_{j\to\infty}\|\nabla u_j\|_p=\infty,\qquad
\|\nabla u_j\|_p\ge1,\qquad
\sup_{j\in\mathbb{N}}\|\nabla u_j\|_{q,a}<\infty,
\end{equation}
for any $j\geq n$. Arguing as in \eqref{n3.9} and \eqref{4bis}, we now obtain as $j\to\infty$
$$
c_1+c_2\|u_j\|+o(1)\ge\left(\frac{1}{p\theta}-\frac{1}{\sigma}\right)\kappa\|\nabla u_j\|_p^p-D
$$
which yields by \eqref{rhoh} and Proposition \ref{P2.1}
\begin{equation}\label{4.2}
0<\left(\frac{1}{p\theta}-\frac{1}{\sigma}\right)\kappa\le c_2
\frac{\left(\|\nabla u_j\|_p^p+\|\nabla u_j\|_{q,a}^q\right)^{1/p}}
{\|\nabla u_j\|_p^p}+o(1),
\end{equation}
as $j\to\infty$. Again \eqref{4.2} cannot occur by~\eqref{infty2}.
The claim is now completely proved.

Hence, $\{u_j\}_j$ is bounded in $W^{1,\mathcal H}_0(\Omega)$.
By Propositions \ref{P2.1}-\ref{P2.3}, the reflexivity of $W^{1,\mathcal H}_0(\Omega)$ and \cite[Theorem 4.9]{B}, there exists a subsequence, still denoted by $\{u_j\}_j$, and $u\in W^{1,\mathcal H}_0(\Omega)$ such that
\begin{equation}\label{n3.10}
\begin{gathered}
u_j\rightharpoonup u\mbox{ in }W^{1,\mathcal H}_0(\Omega),
\qquad\nabla u_j\rightharpoonup\nabla u\mbox{ in }\left[L^{\mathcal H}(\Omega)\right]^N,
\qquad\|\nabla u_j\|_p\rightarrow\ell_p,\\
\|\nabla u_j\|_{q,a}\rightarrow\ell_q,\qquad
u_j\to u\mbox{ in }L^\nu(\Omega),\qquad u_j(x)\rightarrow u(x)\mbox{ a.e. in }\Omega,
\end{gathered}
\end{equation}
as $j\to\infty$, with $\nu\in[1,p^*)$.
By \eqref{3.7}, \eqref{3.11} and \eqref{n3.10}, we have
\begin{align}\label{n3.12}
o(1)=\langle I'(u_j),u_j-u\rangle=&M(\|\nabla u_j\|_p^p)\int_\Omega|\nabla u_j|^{p-2}\nabla u_j\cdot(\nabla u_j-\nabla u)dx\nonumber\\
&+M(\|\nabla u_j\|_{q,a}^q)\int_\Omega a(x)|\nabla u_j|^{q-2}\nabla u_j\cdot(\nabla u_j-\nabla u)dx
-\int_\Omega f(x,u_j)(u_j-u)dx\nonumber\\
=&M(\ell_p^p)\int_\Omega|\nabla u_j|^{p-2}\nabla u_j\cdot(\nabla u_j-\nabla u)dx\\
&+M(\ell_q^q)\int_\Omega a(x)|\nabla u_j|^{q-2}\nabla u_j\cdot(\nabla u_j-\nabla u)dx+o(1)\nonumber
\end{align}
as $j\to\infty$. 
From this, we need to distinguish two situations, considering the behavior of $M$ at zero.

\begin{case}\label{step1}
{\em Let $M$ verify $M(0)=0$.}
\end{case}
\noindent 
Here, since $\ell_p\geq0$ and $\ell_q\geq0$ in \eqref{n3.10}, we split the proof in four subcases.
\begin{subcase}
{\em Let $\ell_p=0$ and $\ell_q=0$.}
\end{subcase}
\noindent
By \eqref{n3.10}, we have $\|\nabla u_j\|_p\to0$ and $\|\nabla u_j\|_{q,a}\to0$ as $j\to\infty$, implying that $u_j\to0$ in $W^{1,\mathcal H}_0(\Omega)$ thanks to \eqref{rhoh} and Proposition \ref{P2.1}. This concludes the proof in this subcase.

\begin{subcase}\label{case2}
{\em Let $\ell_p=0$ and $\ell_q>0$.}
\end{subcase}
\noindent
This situation can not occur.
Indeed, \eqref{n3.12} and $(M_2)$ yield that
\begin{equation}\label{1944}
\lim_{j\to\infty}\int_\Omega a(x)|\nabla u_j|^{q-2}\nabla u_j\cdot(\nabla u_j-\nabla u)dx=0.
\end{equation}
By \eqref{3.13}, \eqref{n3.10} and being $\ell_p=0$, we get
\begin{equation}\label{pre1944}
\lim_{j\to\infty}\int_\Omega a(x)|\nabla u|^{q-2}\nabla u\cdot(\nabla u_j-\nabla u)dx=\lim_{j\to\infty}\langle L(u),u_j-u\rangle=0.
\end{equation}
From this and \eqref{1944}, we obtain
\begin{equation}\label{1944uno}
\lim_{j\to\infty}\int_\Omega a(x)\left(|\nabla u_j|^{q-2}\nabla u_j-|\nabla u|^{q-2}\nabla u\right)\cdot(\nabla u_j-\nabla u)dx=0.
\end{equation}
Now, we recall the well known Simon inequalities, see \cite{S}, such that
\begin{equation}\label{simon}
|\xi-\eta|^\nu\leq
\begin{cases}
c\,(|\xi|^{\nu-2}\xi-|\eta|^{\nu-2}\eta)\cdot(\xi-\eta), & \mbox{if $\nu\geq2$,}\\
c\left[(|\xi|^{\nu-2}\xi-|\eta|^{\nu-2}\eta)\cdot(\xi-\eta)\right]^{\nu/2}\left(|\xi|^\nu+|\eta|^\nu\right)^{(2-\nu)/2}, & \mbox{if $1<\nu<2$,}
\end{cases}
\end{equation}
for any $\xi$, $\eta\in\mathbb R^N$, with $c$ a suitable positive constant.
Therefore, if $q\geq2$ by \eqref{simon} we have
\begin{equation}\label{1944due}
\|\nabla u_j-\nabla u\|_{q,a}^q\leq c\int_\Omega a(x)\left(|\nabla u_j|^{q-2}\nabla u_j-|\nabla u|^{q-2}\nabla u\right)\cdot(\nabla u_j-\nabla u)dx.
\end{equation}
While, if $1<q<2$ by \eqref{simon} and the H\"older inequality we have
\begin{equation}\label{1944tre}
\begin{aligned}
&\|\nabla u_j-\nabla u\|_{q,a}^q\\
&\,\,\,\,\leq c
\int_\Omega a(x)\left[\left(|\nabla u_j|^{q-2}\nabla u_j-|\nabla u|^{q-2}\nabla u\right)\cdot(\nabla u_j-\nabla u)\right]^{q/2}
\left(|\nabla u_j|^q+|\nabla u|^q\right)^{(2-q)/2}dx\\
&\,\,\,\,\leq c
\left[\int_\Omega a(x)\left(|\nabla u_j|^{q-2}\nabla u_j-|\nabla u|^{q-2}\nabla u\right)\cdot(\nabla u_j-\nabla u)dx\right]^{q/2}
\left(\|\nabla u_j\|_{q,a}^q+\|\nabla u\|_{q,a}^q\right)^{(2-q)/2}\\
&\,\,\,\,\leq \overline{c}
\left[\int_\Omega a(x)\left(|\nabla u_j|^{q-2}\nabla u_j-|\nabla u|^{q-2}\nabla u\right)\cdot(\nabla u_j-\nabla u)dx\right]^{q/2}
\end{aligned}
\end{equation}
where the last inequality follows by the boundedness of $\{u_j\}_j$ in $W^{1,\mathcal H}_0(\Omega)$, Proposition \ref{P2.1} and \eqref{rhoh}, with a suitable new positive constant $\overline{c}$. Thus, combining \eqref{1944uno}, \eqref{1944due} and \eqref{1944tre}, we obtain that $\nabla u_j\to\nabla u$ in $\left[L_a^q(\Omega)\right]^N$ as $j\to\infty$, which yields that 
\begin{equation}\label{contraddizione}
\|\nabla u\|_{q,a}=\ell_q>0
\end{equation}
by \eqref{n3.10}. By \cite[Theorem 4.9]{B}, up to a subsequence, we also obtain
\begin{equation}\label{contraddizione2}
a(x)^{1/q}|\nabla u_j(x)|\to a(x)^{1/q}|\nabla u(x)|\mbox{ a.e. in }\Omega
\end{equation}
as $j\to\infty$. 
While, \eqref{n3.10} with $\ell_p=0$ implies that $\|\nabla u_j\|_p\to0$, that is $|\nabla u_j|\to0$ in $L^p(\Omega)$ as $j\to\infty$.
Thus, going to a further subsequence, by \cite[Theorem 4.9]{B} we have $|\nabla u_j(x)|\to0$ a.e. in $\Omega$, so that also $a(x)^{1/q}|\nabla u_j(x)|\to0$ a.e. in $\Omega$ as $j\to\infty$.
From this and \eqref{contraddizione2}, we get that $a(x)^{1/q}|\nabla u(x)|=0$ a.e. in $\Omega$ which contradicts \eqref{contraddizione}.

\begin{subcase}\label{case3}
{\em Let $\ell_p>0$ and $\ell_q=0$.}
\end{subcase}
\noindent
In this subcase, \eqref{n3.12} and $(M_2)$ yield that
\begin{equation}\label{vediamo}
\lim_{j\to\infty}\int_\Omega|\nabla u_j|^{p-2}\nabla u_j\cdot(\nabla u_j-\nabla u)dx=0.
\end{equation}
By \eqref{n3.10} and Proposition \ref{P2.1}, we have $\nabla u_j\rightharpoonup\nabla u$ in $\left[L^p(\Omega)\right]^N$ as $j\to\infty$, so that
\begin{equation}\label{n3.122}
\lim_{j\to\infty}\int_\Omega |\nabla u|^{p-2}\nabla u\cdot(\nabla u_j-\nabla u)dx=0,
\end{equation}
which joint with \eqref{vediamo} gives
$$
\lim_{j\to\infty}\int_\Omega\left(|\nabla u_j|^{p-2}\nabla u_j-|\nabla u|^{p-2}\nabla u\right)\cdot(\nabla u_j-\nabla u)dx=0.
$$
From this, using \eqref{simon} and arguing as in \eqref{1944due}-\eqref{1944tre}, we obtain that $\nabla u_j\to\nabla u$ in $\left[L^p(\Omega)\right]^N$ as $j\to\infty$. Thus, by \cite[Theorem 4.9]{B}, up to a subsequence, we get 
\begin{equation}\label{vediamo3}
|\nabla u_j(x)|\to|\nabla u(x)|\mbox{ a.e. in }\Omega
\end{equation}
as $j\to\infty$. While, by \eqref{n3.10} with $\ell_q=0$ we get that $\|\nabla u_j\|_{q,a}\to0$, that is $a^{1/q}|\nabla u_j|\to0$ in $L^q(\Omega)$ as $j\to\infty$.
Thus, going to a further subsequence, by \cite[Theorem 4.9]{B} we have $a^{1/q}(x)|\nabla u_j(x)|\to0$ a.e. in $\Omega$ as $j\to\infty$, which guarantees that $|\nabla u_j(x)|\to0$ a.e. in $\Omega\setminus A$, with
$$A:=\left\{x\in\Omega:\,\,a(x)=0\right\}.
$$
Hence, by \eqref{vediamo3} we obtain that $\nabla u(x)=\overline{0}$ a.e. in $\Omega\setminus A$ so that
$$\|\nabla u_j-\nabla u\|_{q,a}^q=\int_{\Omega\setminus A}a(x)|\nabla u_j-\nabla u|^qdx=\int_{\Omega\setminus A}a(x)|\nabla u_j|^qdx
=\|\nabla u_j\|_{q,a}^q\to0
$$
as $j\to\infty$. Hence, $\nabla u_j\to\nabla u$ in $\left[L^p(\Omega)\right]^N\cap \left[L_a^q(\Omega)\right]^N$ as $j\to\infty$, thanks to \eqref{rhoh} and Proposition \ref{P2.1} we conclude that $u_j\to u$ in $W^{1,\mathcal H}_0(\Omega)$.

\begin{subcase}\label{case4}
{\em Let $\ell_p>0$ and $\ell_q>0$.}
\end{subcase}
\noindent
We can still prove \eqref{n3.12} and \eqref{n3.122}, which used in \eqref{pre1944} give    
\begin{equation}\label{n3.13}
\begin{aligned}
M&(\ell_p^p)\int_\Omega\left(|\nabla u_j|^{p-2}\nabla u_j-|\nabla u|^{p-2}\nabla u\right)\cdot(\nabla u_j-\nabla u)dx\\
&+M(\ell_q^q)\int_\Omega a(x)\left(|\nabla u_j|^{q-2}\nabla u_j-|\nabla u|^{q-2}\nabla u\right)\cdot(\nabla u_j-\nabla u)dx=o(1)
\end{aligned}
\end{equation}
as $j\to\infty$. Since by convexity we can obtain
$$
\begin{aligned}
&\left(|\nabla u_j|^{p-2}\nabla u_j-|\nabla u|^{p-2}\nabla u\right)\cdot(\nabla u_j-\nabla u)
\geq0\mbox{ a.e. in }\Omega,\\
&a(x)\left(|\nabla u_j|^{q-2}\nabla u_j-|\nabla u|^{q-2}\nabla u\right)\cdot(\nabla u_j-\nabla u)
\geq0\mbox{ a.e. in }\Omega
\end{aligned}
$$
where in the second inequality $a(x)\geq0$ a.e. in $\Omega$ by \eqref{cruciale},
then \eqref{n3.13} yields
$$\min\left\{M(\ell_p^p),M(\ell_q^q)\right\}\limsup_{j\to\infty}\langle L(u_j)-L(u),u_j-u\rangle\leq0,
$$
with both $M(\ell_p^p)>0$ and $M(\ell_q^q)>0$, thanks to $(M_2)$. Hence, by Proposition \ref{P2.4} we conclude that $u_j\to u$ in $W^{1,\mathcal H}_0(\Omega)$ as $j\to\infty$. This completes the proof of {\em Case \ref{step1}}.
\begin{case}
{\em Let $M$ verify $M(0)>0$.}
\end{case}
\noindent 
Since $M(\ell_p^p)>0$ and $M(\ell_q^q)>0$ for $\ell_p\geq0$ and $\ell_q\geq0$, thanks to also $(M_2)$, we can argue exactly as in {\em Subcase \ref{case4}}, concluding the proof of Lemma \ref{L4.3}.
\end{proof}

\begin{remark}\label{maq2}
{\em We observe that if $|A|=0$, also {\em Subcase \ref{case3}} gives a contradiction such as in {\em Subcase \ref{case2}}.
Moreover, the result in Lemma \ref{L4.3} holds assuming $(f_1')$ instead of $(f_1)$.}
\end{remark}

We are now ready to prove Theorems \ref{T3.3} and \ref{T4.4}.

\begin{proof}[Proof of Theorem \ref{T3.3}]
Since $I(0)=0$, by Lemmas \ref{L4.1}-\ref{L4.3} and the mountain pass theorem, we prove the existence of a nontrivial weak solution of \eqref{P2}.
\end{proof}

\begin{proof}[Proof of Theorem \ref{T4.4}]
Functional $I$ is even and satisfies the $(PS)$ condition thanks to $(f_3)$ and Remark \ref{maq2}, respectively.

We now prove \eqref{cond2} for $\mathcal F=I$.
For any $u\in Z_j$ with $\|u\|\geq1$, by \eqref{rhoh} and Proposition \ref{P2.1} we have
$$
\|u\|^p\leq
\begin{cases}
\|\nabla u\|_p^p+\|\nabla u\|_{q,a}^q,
&\mbox{if $\|\nabla u\|_p\geq1$ and $\|\nabla u\|_{q,a}\geq1$},\\
\|\nabla u\|_p^p+\|\nabla u\|_{q,a}^q\leq2\|\nabla u\|_p^p,
&\mbox{if $\|\nabla u\|_p\geq1$ and $\|\nabla u\|_{q,a}<1$},\\
\|\nabla u\|_p^p+\|\nabla u\|_{q,a}^q\leq2\|\nabla u\|_{q,a}^q,
&\mbox{if $\|\nabla u\|_p<1$ and $\|\nabla u\|_{q,a}\geq1$}.
\end{cases}
$$
Thus, by $(M_1)$, $(M_2)$, \eqref{3.6}, \eqref{3.1n}, \eqref{3.18}, the H\"older inequality and the definition of $\beta_j$ in \eqref{3.15}, for any $u\in Z_j$ with $\|u\|\geq 1$ we obtain
\begin{equation}\label{4.18}
\begin{aligned}
I(u)&\geq\frac{1}{p\theta}M(\|\nabla u\|_p^p)\|\nabla u\|_p^p+\frac{1}{q\theta}M(\|\nabla u\|_{q,a}^q)\|\nabla u\|_{q,a}^q
-C\|u\|_1-C\|u\|_r^r\\
&\geq\frac{\kappa}{2q\theta}\|u\|^p\!-\!\beta_j|\Omega|^{(r-1)/r}\|u\|
\!-\!\beta_j^rC\|u\|^r
\!\geq\!\left[\frac{\kappa}{2q\theta}-C\left(\beta_j|\Omega|^{(r-1)/r}\!+\!\beta_j^r\right)\|u\|^{r-p}\right]
\!\|u\|^p,
\end{aligned}
\end{equation}
with $\kappa>0$ given by $(M_2)$.
Let us choose
$$\gamma_j:=\left[\frac{\kappa}{4q\theta}\cdot
\frac{1}{C\left(\beta_j|\Omega|^{(r-1)/r}+\beta_j^r\right)}\right]^{1/(r-p)}
$$
such that $\gamma_j\to\infty$ as $j\to\infty$, since $\beta_j\to 0$ as $j\to\infty$ by \cite[Lemma 7.1]{LD} and $r>p$ by $(f'_1)$.
Then, by \eqref{4.18}, for any $u\in Z_j$ with $\|u\|=\gamma_j$ we get
$$
I(u)\geq\frac{\kappa}{4q\theta}\gamma_j^p\to\infty\mbox{ as }j\to\infty,
$$
which yields \eqref{cond2}. 

Now, let us fix $j\in\mathbb N$.
For any $u\in Y_j$ with $\|u\|\geq1$, by \eqref{3.5} and the continuity of $M$, we have
\begin{equation}\label{4.19}
\mathscr M(\|\nabla u\|_p^p)+\mathscr M(\|\nabla u\|_{q,a}^q)\leq
\begin{cases}
\mathscr M(1)\|\nabla u\|_p^{p\theta}+\mathscr M(1)\|\nabla u\|_{q,a}^{q\theta}, 
& \mbox{if $\|\nabla u\|_p\geq1$ and $\|\nabla u\|_{q,a}\geq1$,}\\
\mathscr M(1)\|\nabla u\|_p^{p\theta}+\mathcal M, 
& \mbox{if $\|\nabla u\|_p\geq1$ and $\|\nabla u\|_{q,a}<1$,}\\
\mathcal M+\mathscr M(1)\|\nabla u\|_{q,a}^{q\theta}
& \mbox{if $\|\nabla u\|_p<1$ and $\|\nabla u\|_{q,a}\geq1$,}
\end{cases}
\end{equation}
with $\displaystyle\mathcal M=\max_{t\in[0,1]}\mathscr M(t)>0$ by $(M_2)$. 
From this, by \eqref{3.4}-\eqref{3.6}, \eqref{3.17} and Proposition \ref{P2.1}, for any $u\in Y_j$ with $\|u\|\geq 1$ we have
$$
I(u)\leq\frac{\mathscr M(1)}{p}\|u\|^{q\theta}+\mathcal M-d_1c(j)\|u\|^\sigma-d_2|\Omega|,
$$
which gives \eqref{cond1} with $\rho_j>\max\{1,\gamma_j\}$ sufficiently large, since $\sigma>q\theta$ by $(f_2)$.

Thus, functional $I$ satisfies both \eqref{cond1} and \eqref{cond2}, so that \cite[Theorem 3.6]{Wil} gives the existence of an unbounded sequence of critical points of $I$ with unbounded energy, concluding the proof of Theorem \ref{T4.4}.

\end{proof}

\section*{Acknowledgments}

The authors thank Francesca Colasuonno for a nice conversation about variational double phase problems.
The authors are members of {\em Gruppo Nazionale per l'Analisi Ma\-te\-ma\-ti\-ca, la Probabilit\`a e le loro Applicazioni} (GNAMPA) 
of the {\em Istituto Nazionale di Alta Matematica} (INdAM).

A. Fiscella realized the manuscript within the auspices of the INdAM-GNAMPA project titled {\em Equazioni alle derivate parziali: problemi e modelli} (Prot\_20191219-143223-545), of the FAPESP Project titled {\em Operators with non standard growth} (2019/23917-3), of the FAPESP Thematic Project titled {\em Systems and partial differential equations} (2019/02512-5) and of the CNPq Project titled {\em Variational methods for singular fractional problems} (3787749185990982).

A. Pinamonti is partially supported by the INdAM-GNAMPA project {\em Convergenze variazionali per funzionali e operatori dipendenti da campi vettoriali}.

\end{document}